\newcommand{\Ls}{\mathcal{L}_s}
\newcommand{\Us}{\mathcal{U}_s}
\newcommand{\No}{\mathbb{N}_{\mathrm{odd}}}
\newtheorem{theorem}{Theorem}[section]
\newtheorem{lemma}[theorem]{Lemma}
\newtheorem{proposition}[theorem]{Proposition}
\theoremstyle{definition}
\theoremstyle{remark}
\numberwithin{equation}{section}
\title{Asymptotics of the optimal values of potentials generated by greedy energy sequences on the unit circle}
\author{Abey L\'{o}pez-Garc\'{i}a \qquad Erwin Mi\~{n}a-D\'{i}az}
\date{\today}
\begin{document}

\maketitle

\begin{abstract}
For the Riesz and logarithmic potentials, we consider greedy energy sequences $(a_n)_{n=0}^\infty$ on the unit circle $S^1$, constructed in such a way that for every $n\geq 1$, the discrete potential generated by  the first $n$ points $a_0,\ldots,a_{n-1}$  of the sequence attains its minimum value (say $U_n$) at $a_n$.  We obtain asymptotic formulae that describe the behavior of $U_n$ as $n\to\infty$, in terms of  certain bounded arithmetic functions with a doubling periodicity property. As previously shown in \cite{LopMc2}, after properly translating and scaling $U_n$, one obtains a new sequence $(F_n)$ that is bounded and divergent. We find the exact value of $\liminf F_n$ (the value of $\limsup F_n$ was already given in \cite{LopMc2}), and show that the interval $[\liminf F_n,\limsup F_n]$ comprises all the limit points of the sequence $(F_n)$.

\smallskip

\textbf{Keywords:} Greedy energy sequence, potential, arithmetic function, doubling periodicity, Karamata's inequality, Riemann zeta function.

\textbf{MSC 2020:} Primary 31C20, 31A15; Secondary 11M06.

\end{abstract}

\section{Introduction and  main results}

For $s\geq 0$, let  $K_s:\mathbb{C}\times \mathbb{C}\to (-\infty,\infty]$ be defined as
\[
K_{s}(x,y):=\begin{cases}
|x-y|^{-s}, & \,\,s>0,\\
-\log|x-y|, & \,\, s=0.
\end{cases}
\]
For $s>0$, $K_s$ is known as the Riesz kernel, while for $s=0$, it is known as the logarithmic kernel. 

The $s$-energy of $N\geq  2$ arbitrary points $z_1,\ldots,z_N$ is defined as
\[
E_s(z_1,\ldots,z_N):=\sum_{1\leq j\not=k\leq N}K_s(z_j,z_k)=2\sum_{1\leq j<k\leq N}K_s(z_j,z_k),
\]
and the potential generated by $N\geq 1$ points $z_1,\ldots,z_N$ is the function  
\[
x\mapsto\sum_{\ell=1}^N K_s(x,z_\ell),\qquad x\in \mathbb{C}.
\]
We adopt the convention that the energy $E_s(z_1)$ of a single particle is $0$.

A greedy $s$-energy sequence $(a_{n})_{n=0}^{\infty}\subset S^{1}$ is a sequence of points on the unit circle $S^1$ generated through the following recursive algorithm. The initial point $a_{0}\in S^{1}$ is chosen arbitrarily, and each subsequent point $a_{n}$, $n\geq 1$, is selected so as to minimize the potential generated by the $n$ previously chosen points:
\begin{equation}\label{greedyalg}
\sum_{\ell=0}^{n-1}K_{s}(a_{n},a_{\ell})=\min_{x\in S^1}\sum_{\ell=0}^{n-1}K_{s}(x,a_{\ell}),\qquad n\geq 1.
\end{equation}

If the choice of a point $a_{n}$ is not unique, we pick any among the possible minimizers. In the case of the logarithmic potential, such sequences are more commonly known in the literature as Leja sequences.

Let us denote by $U_{N,s}(x)$ the potential  generated by the first $N$ points $a_{0},\ldots,a_{N-1}$ of a (fixed) greedy $s$-energy sequence $(a_{n})_{n=0}^\infty$. In this way,  
\begin{equation*}
	U_{N,s}(x)=
	\begin{cases}
		\sum_{k=0}^{N-1}|x-a_{k}|^{-s} ,& s>0,\\[0.5em]
		-\sum_{k=0}^{N-1} \log|x-a_{k}|, & s=0.
	\end{cases}
\end{equation*}
Thus, according to \eqref{greedyalg}, $U_{N,s}(x)$ attains its minimum value on $S^{1}$ at the point $a_{N}$. This optimal value is also known in the literature as polarization of the configuration $(a_{0},\ldots,a_{N-1})$, see  e.g. \cite{BorHarSaff}. 

Since 
\[
E_s(a_0,\ldots,a_{N-1},x)=2U_{N,s}(x)+E_s(a_0,\ldots,a_{N-1}),
\]
we see that the  energy  $E_s(a_0,\ldots,a_{N-1},x)$, as a function of  $x\in S^1$, is also minimized when $x=a_N$.

The asymptotic behavior as $N\to\infty$ of $U_{N,s}(a_N)$, as well as that of the energy $E_s(a_0,\ldots,a_N)$, have been investigated in \cite{LopMc2,LopSaff,LopWag}. The results of \cite{LopSaff} are broader in scope, as they are formulated  for more general compact sets, not just $S^1$. In this work, we continue the research initiated in \cite{LopMc2} on the limiting behavior of the sequence $(F_{N,s})_{N=1}^\infty$  defined as 
\begin{align}\label{defF_N,s}\begin{split}
F_{N,s}:=\begin{cases}
	-\frac{U_{N,0}(a_{N})}{\log(N+1)}, & s=0,\\[0.7em]
	\frac{U_{N,s}(a_N)-NI_s(\sigma)}{N^s},& 0<s<1,\\[0.5em]
	\frac{U_{N,1}(a_{N})-\pi^{-1} N\log N}{N}, & s=1,\\[0.5em]
	\frac{U_{N,s}(a_N)}{N^s}, & s>1.
\end{cases}
\end{split}
\end{align}
Here, $I_s(\sigma)$, $0<s<1$, is the energy of the arclength measure $\sigma$ on $S^1$, normalized so that $\sigma(S^{1})=1$ (cf. \cite[Cor. A.11.4]{BorHarSaff}):
\begin{equation}\label{energyeqmeas}
	I_{s}(\sigma):=\iint_{S^{1}\times S^{1}}|x-y|^{-s}\,d\sigma(x)\,d\sigma(y)=\frac{2^{-s}}{\sqrt{\pi}}\frac{\Gamma\left(\frac{1-s}{2}\right)}{\Gamma(1-\frac{s}{2})}.
\end{equation}

 It was shown in \cite{LopMc2} (see Theorems 1.1, 1.4, 1.6, and 1.7 therein) that for each value of $s\geq 0$, the sequence $(F_{N,s})_{N=1}^\infty$ is bounded and divergent. Thus, a study of the asymptotic behavior of the sequence $(F_{N,s})$ is a study of the first-order asymptotics of $U_{N,s}(a_N)$ when $s=0$ or $s>1$, and a study of the second-order asymptotics of $U_{N,s}(a_N)$ when $0<s\leq 1$.
 
 While the exact value of the upper limit of the sequence $(F_{N,s})$ was found in \cite{LopMc2}, the lower limit was expressed in terms of certain extremal quantities whose values were still to be determined. Some of the limit points of the sequence $(F_{N,s})$ were also identified in \cite{LopMc2}. 
 
 We will improve upon these results by finding the exact value of the lower limit $\liminf F_{N,s}$, and showing that the interval $[\liminf F_{N,s},\limsup F_{N,s}]$ comprises all the limit points of the sequence $(F_{N,s})$. Moreover, for $s>0$, we establish a new (finite) asymptotic expansion for $U_{N,s}(a_N)$ that explains more transparently the asymptotic features  exhibited by the sequence $(F_{N,s})$. 

Two quantities that will play an important role in our analysis are the energy 
\[
\mathcal{L}_s(N):=E_s(z_{1,N},\ldots,z_{N,N})  
\]
of the configuration formed by the $N$th roots of unity
\[
z_{k,N}:=\exp(2\pi i(k-1)/N),\qquad 1\leq k\leq N,
\]
 and the minimum value $\Us(N)$ that the potential generated by $z_{1,N},\ldots,z_{N,N}$ attains on $S^1$. This minimum value is attained at any of the points situated midway between two consecutive roots, so that
 \begin{align*}
	\mathcal{U}_{s}(N)=\sum_{k=1}^{N}K_s(e^{-\pi i/N}z_{1,N},z_{k,N}), \qquad s\geq 0.
\end{align*}

It is not difficult to see that the quantities $\Us(N)$ and $\Ls(N)$ are related by the identity (see \cite[Prop. 3.3]{LopMc2})
\begin{equation}\label{eq:relUsLs}
	\Us(N)=\frac{\mathcal{L}_{s}(2N)}{2N}-\frac{\Ls(N)}{N},\qquad N\geq 1.
\end{equation}
 
Due to the circle symmetry, greedy $s$-energy sequences have a very rigid geometric structure, which is the same for all values of $s\geq 0$ (see \cite{BC,LopMc2,LopSaff}). As shown in \cite{BC}, see also \cite[Prop. 3.4]{LopMc2}, every section $(a_{0},\ldots,a_{N-1})$ of a greedy sequence can be described geometrically in terms of the binary representation of $N$. Using that description, it was proved in \cite[Prop. 3.5]{LopMc2} that if $N\in\mathbb{N}$ has the binary decomposition
\begin{equation}\label{bindecompN}
N=2^{n_{1}}+2^{n_{2}}+\cdots+2^{n_{p}},\qquad n_{1}>n_{2}>\cdots>n_{p}\geq 0,
\end{equation}
then\begin{equation}\label{potbinform}
U_{N,s}(a_N)=\sum_{j=1}^{p}\mathcal{U}_{s}(2^{n_{j}}),\qquad s>0.
\end{equation}
 
Formula \eqref{potbinform} is fundamental to our analysis as it allows us to deduce the asymptotic behavior of $U_{N,s}(a_N)$ from that of $\Us(N)$. In turn, the behavior of $\Us(N)$ can be derived  via \eqref{eq:relUsLs} from the asymptotic expansions for $\Ls(N)$ that were established in \cite{BHS}. 

The logarithmic case $s=0$ is much simpler because, as shown by Calvi and Van Manh in \cite{CalviVan}, the representation of $U_{N,0}(a_N)$ in terms of the binary decomposition \eqref{bindecompN} takes the particularly simple form
\begin{equation}\label{CalviVanform}
U_{N,0}(a_N)=-p \log 2.
\end{equation}
Observe that \eqref{potbinform} reduces to \eqref{CalviVanform} when $s=0$, because $\mathcal{U}_0(N)\equiv -\log 2$.

We now introduce the functions that play a primary role in the asymptotic description of $U_{N,s}(a_N)$. For an integer $N\in\mathbb{N}$, we look at its binary decomposition \eqref{bindecompN} and define
\begin{equation}\label{periodfunc}
\begin{aligned}
\eta(N) & :=\left(\frac{2^{n_{1}}}{N},\frac{2^{n_{2}}}{N},\ldots,\frac{2^{n_{p}}}{N}\right),\\
\tau_{b}(N) & :=p.
\end{aligned}
\end{equation}
 Observe that $\tau_{b}(N)$ is the number of 1's in the binary representation of $N$. The most important property of the arithmetic functions defined in \eqref{periodfunc} is the doubling periodicity
\begin{equation}\label{doubperiod}
\begin{aligned}
\eta(N) & =\eta(2N),\\
\tau_{b}(N) & =\tau_{b}(2N).
\end{aligned} 
\end{equation}

Following the notation in \cite{LopMc2}, for a vector $\vec{\theta}=(\theta_{1},\ldots,\theta_{p})$ of  $p\in\mathbb{N}$  positive components $\theta_k> 0$, we define 
\begin{align*}
G(\vec{\theta};s):={} &\sum_{k=1}^{p}\theta_{k}^{s}, \\
\Lambda(\vec{\theta}):={} &\sum_{k=1}^{p}\theta_{k}\log\theta_{k}.
 \end{align*}
Clearly, for all $N\in\mathbb{N}$, we have
\[
G(\eta(N);1)=1,  \qquad G(\eta(N);0)=\tau_{b}(N), 
\]
and by \eqref{doubperiod}, we also have 
\begin{align}
	G(\eta(N);s)={} &G(\eta(2N);s),\qquad  s\geq 0, \label{doublingperiodicity}\\
	 \Lambda(\eta(N))={} & \Lambda(\eta(2N)). \label{doublingperiodicity2}
\end{align}

It was proven in \cite{LopMc2} (Theorems 1.4, 1.6, and 1.7) that for $s>0$, the limit superior and the limit inferior of the sequence \eqref{defF_N,s} are given by  
\begin{align*}
\limsup_{N\rightarrow\infty}F_{N,s}={} &
\begin{cases} (2^{s}-1)\frac{2\zeta(s)}{(2\pi)^{s}},&\quad s>0,\ s\not=1,\\[0.5em]
\frac{1}{\pi}\left(\gamma+\log(8/\pi)\right),&\quad s=1,\\
\end{cases}
\end{align*}
\begin{align}\label{liminfF_N}
	\liminf_{N\rightarrow\infty}F_{N,s} & =\begin{cases}\overline{g}(s)(2^{s}-1)\frac{2\zeta(s)}{(2\pi)^{s}},&\quad 0<s<1,\\[0.5em]
		\frac{1}{\pi}\left(\gamma+\log(8/\pi)+\lambda\right),&\quad s=1,\\[0.5em]
		\underline{g}(s)(2^{s}-1)\frac{2\zeta(s)}{(2\pi)^{s}},&\quad s>1,
	\end{cases}
\end{align}
where $\gamma=\lim_{N\rightarrow\infty}(\sum_{k=1}^{N}\frac{1}{k}-\log N)$ is the Euler-Mascheroni constant, $\zeta(s)$ is the Riemann zeta function, and  
\begin{align}
\overline{g}(s) & :=\sup_{N\in\mathbb{N}} G(\eta(N);s),\qquad 0<s<1,\label{defgup}\\
\underline{g}(s) & :=\inf_{N\in\mathbb{N}} G(\eta(N);s),\qquad s>1,\label{defgdown}\\
\lambda & :=\inf_{N\in\mathbb{N}}\Lambda(\eta(N)).\label{deflambda}
\end{align}  

Moreover, for every $M\in\mathbb{N}$, the value $	G(\eta(M);s)(2^{s}-1) 2\zeta(s)/(2\pi)^s$ is  a limit point of $(F_{N,s})$ when $s\in (0,1)\cup (1,\infty)$, and the value  $\frac{1}{\pi}(\gamma+\log(8/\pi)+\Lambda(\eta(M)))$ is a limit point of $(F_{N,1})$.
  
The  constants $\overline{g}(s)$, $\underline{g}(s)$, and $\lambda$ were estimated in \cite{LopMc2} as follows:
\begin{align*}
\frac{1}{2^s-1}\leq \overline{g}(s)\leq {} &\frac{2^s}{2^s-1},\qquad 0<s<1,\\
0\leq \underline{g}(s)\leq {} &\frac{1}{2^s-1},\qquad s>1,\\
-2e^{-1} -2\log 2\leq  \lambda\leq {} & -2\log 2.
\end{align*}
 Our first result provides the exact values of these constants.
 \begin{theorem}\label{theo:main1}
	The values of the quantities defined in \eqref{defgup}, \eqref{defgdown}, and \eqref{deflambda} are 
	\begin{align}
		\overline{g}(s) & =\frac{1}{2^{s}-1},\qquad 0<s<1,\label{valuegup}\\
		\underline{g}(s) & =\frac{1}{2^{s}-1},\qquad s>1,\label{valuegdown}\\
		\lambda & =-2 \log 2.\label{valuelambda}
	\end{align}
It thus follows from \eqref{liminfF_N} that
\begin{align*}
	\liminf_{N\rightarrow\infty}F_{N,s} & =\begin{cases}\frac{2\zeta(s)}{(2\pi)^{s}},&\quad s>0,\ s\not=1,\\[0.5em]
		\frac{1}{\pi}\left(\gamma+\log(2/\pi)\right),&\quad s=1.
	\end{cases}
\end{align*}
\end{theorem}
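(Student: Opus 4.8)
The plan is to reduce all three extremal problems to a single majorization statement and then invoke Karamata's inequality; the Riemann zeta function enters only through the substitution into \eqref{liminfF_N} at the very end. I begin by recording the combinatorial shape of $\eta(N)$. If $N=2^{n_{1}}+\cdots+2^{n_{p}}$ with $n_{1}>\cdots>n_{p}\ge 0$, then $\eta(N)=(\theta_{1},\dots,\theta_{p})$ with $\theta_{j}=2^{n_{j}}/N>0$ and $\sum_{j}\theta_{j}=1$; since the exponents strictly decrease, $\theta_{j}\le 2^{-(j-k)}\theta_{k}$ whenever $1\le k\le j\le p$. Writing $r_{k}:=\sum_{j>k}\theta_{j}$ (so $r_{0}=1$), this gives $r_{k-1}=\sum_{j\ge k}\theta_{j}\le \theta_{k}\sum_{i\ge 0}2^{-i}=2\theta_{k}$, hence $r_{k}=r_{k-1}-\theta_{k}\le r_{k-1}/2$ and therefore $r_{k}\le 2^{-k}$, i.e.
\[
\sum_{j=1}^{k}\theta_{j}\ \ge\ 1-2^{-k},\qquad 1\le k\le p=\tau_{b}(N).
\]

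Next I introduce, for each $p\ge 1$, the finite ``geometric template'' $g^{(p)}:=\bigl(2^{-1},2^{-2},\dots,2^{-(p-1)},\,2^{-(p-1)}\bigr)$ (for $p=1$ this is just $(1)$), which is non-increasing, sums to $1$, and has partial sums $\sum_{j=1}^{k}g^{(p)}_{j}=1-2^{-k}$ for $1\le k\le p-1$. By the previous paragraph the vector $\eta(N)$ with $p=\tau_{b}(N)$ entries majorizes $g^{(p)}$. Applying Karamata's inequality to the convex function $x\mapsto x^{s}$ $(s>1)$, to the concave function $x\mapsto x^{s}$ $(0<s<1)$, and to the convex function $x\mapsto x\log x$, we obtain for every $N$ with $\tau_{b}(N)=p$ that $G(\eta(N);s)\le G(g^{(p)};s)$ when $0<s<1$, $G(\eta(N);s)\ge G(g^{(p)};s)$ when $s>1$, and $\Lambda(\eta(N))\ge \Lambda(g^{(p)})$.

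It then remains to compute the templates. Elementary geometric sums give
\[
G(g^{(p)};s)=\frac{2^{-s}+2^{-(p-1)s}-2^{1-ps}}{1-2^{-s}},\qquad
\Lambda(g^{(p)})=-\bigl(2-2^{-(p-2)}\bigr)\log 2 .
\]
Since $2^{-(p-1)s}-2^{1-ps}=2^{-(p-1)s}\bigl(1-2^{1-s}\bigr)$ is negative for $s<1$ and positive for $s>1$, one reads off $G(g^{(p)};s)<\tfrac{1}{2^{s}-1}$ for $0<s<1$, $G(g^{(p)};s)>\tfrac{1}{2^{s}-1}$ for $s>1$, and $\Lambda(g^{(p)})>-2\log 2$, while all three converge as $p\to\infty$ to $\tfrac{1}{2^{s}-1}$, $\tfrac{1}{2^{s}-1}$, $-2\log 2$ respectively; this gives $\overline{g}(s)\le\tfrac{1}{2^{s}-1}$, $\underline{g}(s)\ge\tfrac{1}{2^{s}-1}$, $\lambda\ge-2\log 2$. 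For the matching bounds I evaluate the functionals on $N=2^{m}-1$, for which $\eta(2^{m}-1)=\bigl(2^{m-1}/(2^{m}-1),\dots,1/(2^{m}-1)\bigr)$ has $m$ entries and $G(\eta(2^{m}-1);s)=\tfrac{2^{ms}-1}{(2^{s}-1)(2^{m}-1)^{s}}$, with a comparable closed form for $\Lambda$; letting $m\to\infty$ recovers exactly $\tfrac{1}{2^{s}-1}$, $\tfrac{1}{2^{s}-1}$, $-2\log 2$, proving \eqref{valuegup}, \eqref{valuegdown}, \eqref{valuelambda}. Substituting these into \eqref{liminfF_N} and using $\overline{g}(s)(2^{s}-1)=\underline{g}(s)(2^{s}-1)=1$ together with $\log(8/\pi)-2\log 2=\log(2/\pi)$ yields the stated formula for $\liminf F_{N,s}$.

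The main obstacle is spotting the correct finite template and the majorization $\eta(N)\succ g^{(p)}$: the naive candidate $(2^{-1},\dots,2^{-p})$ does not sum to $1$, and it is precisely the doubled last entry that makes the comparison with $\eta(N)$ clean while still letting $p\to\infty$ recover the genuine extremizer, the infinite geometric sequence $(2^{-j})_{j\ge 1}$, whose $G$-value is $\sum_{j\ge1}2^{-js}=\tfrac{1}{2^{s}-1}$ and whose $\Lambda$-value is $-\log2\sum_{j\ge1}j2^{-j}=-2\log 2$. Everything else is routine: the tail estimate of the first paragraph, the standard statement of Karamata's inequality, and two finite geometric sums.
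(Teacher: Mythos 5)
Your proof is correct, and it takes a genuinely different (and more unified) route than the paper's. The paper splits the three quantities into two separate arguments: for $\overline{g}$ and $\underline{g}$ it applies Karamata's inequality to compare $\eta(N)$ (with $\tau_b(N)=p$ and $2^p-1<N<2(2^p-1)$) against $\eta(2^p-1)$, the normalized truncated geometric vector, thereby identifying the exact extremizer in each class; for $\lambda$ it uses a separate weighted Jensen-type inequality for $x\log x$ with a clever choice of weights $b_k=2^{n_k+k-2}$, $r_k=2^{-k+2}$. You instead prove a single majorization $\eta(N)\succ g^{(p)}$ against the fixed template $g^{(p)}=(2^{-1},\ldots,2^{-(p-1)},2^{-(p-1)})$ via the clean tail estimate $r_k\le r_{k-1}/2$, and then feed all three functionals through Karamata at once. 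I checked the key steps: the partial-sum bound $\sum_{j\le k}\theta_j\ge 1-2^{-k}$ is right, the template sums to $1$ with the correct partial sums, the closed forms $G(g^{(p)};s)=\frac{2^{-s}+2^{-(p-1)s}-2^{1-ps}}{1-2^{-s}}$ and $\Lambda(g^{(p)})=-(2-2^{-(p-2)})\log 2$ are correct, the sign analysis of $2^{-(p-1)s}(1-2^{1-s})$ gives the right one-sided bounds, and $N=2^m-1$ supplies the matching sequence. What your approach buys is economy and uniformity (one lemma covers $s<1$, $s>1$, and $s=1$ simultaneously, and it still yields the strict bounds \eqref{ineq1}--\eqref{ineq3}); what it gives up, relative to the paper's Theorem~\ref{rangeofG}, is the sharper statement that $\eta(2^p-1)$ is the exact extremizer among all $N$ with $\tau_b(N)=p$ in a dyadic block, since $g^{(p)}$ is not itself of the form $\eta(N)$ (its two equal last entries would merge in a binary expansion). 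For Theorem~\ref{theo:main1} that extra precision is not needed, so your argument is complete as a proof of the stated result.
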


In the logarithmic case $s=0$, we have by virtue of \eqref{CalviVanform} that 
\begin{align}\label{explicitformulaforF_N,0}
F_{N,0}=\frac{\tau_{b}(N) \log 2}{\log(N+1)},\qquad N\geq 1.
\end{align}
Using this formula, one can prove without much difficulty that (cf. \cite[Thm. 1.1]{LopMc2})
\begin{align*}
0<F_{N,0}\leq 1,\qquad N\geq 1,
\end{align*}
and that  
\begin{align}\label{liminfsuplogcase}
\liminf_{N\rightarrow\infty}F_{N,0}=0,\qquad \limsup_{N\rightarrow\infty}F_{N,0}=1.
\end{align}

The results discussed so far indicate that when $s>0$, the sequences $G(\eta(N);s)$ and $\Lambda(\eta(N))$, $N\in \mathbb{N}$, exert a strong influence on the limiting behavior of the sequence $(F_{N,s})$. Our next theorem makes clear why this is so. 

\begin{theorem}\label{corollaryonasymptotics}
	The following asymptotic formulae hold true as $N\to\infty$:  If $s>0$ and $s\not=1$, then 
\begin{align}\label{thirdcase}
	\begin{split}
		F_{N,s}	={} & (2^{s}-1)\frac{2\zeta(s)}{(2\pi)^s}G(\eta(N);s)+\begin{cases}
			O(N^{-s}), & 0<s<1,\\
			O(N^{1-s}),& 1<s<3,\ s\not=2,\\
			O(N^{-2}),	& s=2, \ \mathrm{or}\   s>3,\\
			O(N^{-2}\log N), & s=3.
		\end{cases}
	\end{split}
\end{align}If $s=1$, then 
	\begin{align}\label{secondcase}
		\begin{split}
		F_{N,1}= {} &
		\frac{1}{\pi}(\gamma+\log(8/\pi) +\Lambda(\eta(N)))+O(N^{-1}).	
		\end{split}
	\end{align}
\end{theorem}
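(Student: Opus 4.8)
The plan is to extract from the root‑of‑unity energy expansions of \cite{BHS} a finite asymptotic expansion for $\Us(N)$ via the telescoping identity \eqref{eq:relUsLs}, and then to insert it into the binary formula \eqref{potbinform}. The mechanism is elementary: since \eqref{eq:relUsLs} reads $\Us(M)=\tfrac{1}{2M}\Ls(2M)-\tfrac1M\Ls(M)$, a monomial $c\,M^{\alpha}$ in $\Ls(M)$ contributes $c\,(2^{\alpha-1}-1)M^{\alpha-1}$ to $\Us(M)$. Applied to the expansion from \cite{BHS},
\[
\Ls(M)=I_s(\sigma)M^{2}+\frac{2\zeta(s)}{(2\pi)^{s}}M^{1+s}+\sum_{n\ge 1}c_{n}(s)\,M^{1+s-2n}+(\text{remainder}),\qquad s>0,\ s\notin\{1,3,5,\dots\},
\]
with $c_{n}(s)$ proportional to $\zeta(s-2n)$, the $M^{2}$ term produces exactly $I_s(\sigma)M$ (the factor $2^{2-1}-1$ being $1$), the $M^{1+s}$ term produces $(2^{s}-1)\tfrac{2\zeta(s)}{(2\pi)^{s}}M^{s}$, and every further term produces a contribution $O(M^{s-2n})$. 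Truncating at a sufficiently large index, this yields
\[
\Us(M)=I_s(\sigma)M+(2^{s}-1)\frac{2\zeta(s)}{(2\pi)^{s}}M^{s}+O(M^{s-2}),\qquad M\ge 1,\ s>0,\ s\neq1
\]
(with more terms displayed when $s>3$).

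Next I would put $M=2^{n_j}$ in \eqref{potbinform} and sum over $j=1,\dots,p$. Writing the components of $\eta(N)$ as $\eta_1,\dots,\eta_p$, one has $2^{n_j}=N\eta_j$, $\sum_j 2^{n_j}=N$, and $\sum_j(2^{n_j})^{\beta}=N^{\beta}G(\eta(N);\beta)$ for $\beta\ge 0$, while for $\beta<0$, since the exponents $n_j$ are pairwise distinct,
\[
\sum_{j=1}^{p}O\big((2^{n_j})^{\beta}\big)=O\Big(\sum_{m\ge 0}2^{m\beta}\Big)=O(1).
\]
This last bound is the crucial point: it prevents the error, accumulated over the up to $\log_2 N$ summands, from acquiring a spurious factor $\log N$. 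Summing gives
\[
U_{N,s}(a_N)=I_s(\sigma)N+(2^{s}-1)\frac{2\zeta(s)}{(2\pi)^{s}}N^{s}G(\eta(N);s)+E_{N,s},
\]
with $E_{N,s}=O(1)$ for $0<s<2$ and $E_{N,s}=O(N^{\,s-2})$ for $s>2$ (keeping more terms). Dividing by $N^{s}$, and subtracting $NI_s(\sigma)/N^{s}$ when $0<s<1$ as prescribed by \eqref{defF_N,s}, one obtains \eqref{thirdcase}: for $0<s<1$ the term $I_s(\sigma)N$ is removed and the error is $O(N^{-s})$; for $1<s<3$, $s\neq2$, the surviving term $I_s(\sigma)N/N^{s}=I_s(\sigma)N^{1-s}$ (with $I_s(\sigma)\neq0$) dominates and gives the $O(N^{1-s})$ error; for $s>3$ the $\zeta(s-2)N^{s-1}$ term of $\Ls$ yields the dominant $O(N^{-2})$. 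The integer values $s=2,3$ need a short separate check: at $s=2$, the classical identity $\sum_{m=1}^{N-1}(2\sin(\pi m/N))^{-2}=(N^{2}-1)/12$ gives $\Ls(N)=(N^{3}-N)/12$, hence $\mathcal{U}_2(N)=N^{2}/4$ and $F_{N,2}=\tfrac14 G(\eta(N);2)$ exactly (compatible with the stated $O(N^{-2})$); at $s=3$ the pole of $\zeta(s-2)$ cancels that of $I_s(\sigma)$ and leaves an $N^{2}\log N$ term in $\Ls(N)$, which propagates through \eqref{eq:relUsLs} and \eqref{potbinform} to an $O(N^{-2}\log N)$ error, and for odd $s\ge5$ the induced logarithm lies strictly below $O(N^{-2})$.

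For $s=1$ I would start from the corresponding expansion in \cite{BHS},
\[
\Ls(N)=\frac1\pi N^{2}\log N+\frac1\pi\Big(\gamma+\log\frac{2}{\pi}\Big)N^{2}+O(1),
\]
the logarithm appearing because the two $N^{2}$‑scale contributions each carry a simple pole at $s=1$, finite only after a $\log N$ is generated. By \eqref{eq:relUsLs} and $\log(2M)=\log M+\log2$ this gives $\mathcal{U}_1(M)=\tfrac1\pi M\log M+\tfrac1\pi(\gamma+\log\tfrac{8}{\pi})M+O(M^{-1})$, since $2\log2+\gamma+\log(2/\pi)=\gamma+\log(8/\pi)$. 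Substituting $M=2^{n_j}$ in \eqref{potbinform}, using $\log(2^{n_j})=\log(N\eta_j)$ and $\sum_j\eta_j=G(\eta(N);1)=1$, one finds
\[
U_{N,1}(a_N)=\frac1\pi N\log N+\frac1\pi N\,\Lambda(\eta(N))+\frac1\pi\Big(\gamma+\log\frac{8}{\pi}\Big)N+O(1),
\]
and dividing by $N$ after subtracting $\pi^{-1}N\log N$ yields \eqref{secondcase}.

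The main difficulty is bookkeeping rather than conceptual: one must pin down the $\Ls$‑expansions of \cite{BHS} in exactly this normalized form — in particular the constant $\tfrac1\pi(\gamma+\log(2/\pi))$ at $s=1$ and the precise remainder order in each range of $s$ — and then track the error as it passes first through the subtraction in \eqref{eq:relUsLs} and then through the finite sum \eqref{potbinform}, the delicate step being to keep only $O(1)$ (not $O(\log N)$) from the latter, which is precisely what the geometric summability of $(2^{n_j})^{\beta}$, $\beta<0$, over distinct $n_j$ delivers. The pole cancellations at the odd integers, which turn algebraic terms into logarithmic ones, also require some care, but in every case the induced logarithm either lies strictly below the stated error order or, at $s=1$ and $s=3$, is exactly of that order.
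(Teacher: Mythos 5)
Your proposal is correct and follows essentially the same route as the paper: the paper first packages the chain BHS expansion $\to$ relation \eqref{eq:relUsLs} $\to$ binary formula \eqref{potbinform} into the finite expansions of Theorems \ref{expansionthmgeneralcase} and \ref{expansionthmspecialcase} (with the same key observation that $\sum_k 2^{n_k\beta}=O(1)$ for $\beta<0$ because the exponents are distinct), and then deduces Theorem \ref{corollaryonasymptotics} by comparing the orders of the surviving terms exactly as you do. Your order bookkeeping in each range of $s$, including the special cases $s=2$ and $s=3$ and the constant $\gamma+\log(8/\pi)$ at $s=1$, matches the paper's.
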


Theorem \ref{corollaryonasymptotics} shows that when $s>0$, and as $N\to\infty$, $F_{N,s}$ mimics either the behavior  of $G(\eta(N);s)$, or the behavior of  $\Lambda(\eta(N))$, according to  whether $s\not=1$ or not. In particular,  we see from \eqref{thirdcase}, \eqref{secondcase}, and \eqref{doublingperiodicity}--\eqref{doublingperiodicity2}, that the sequence $(F_{N,s})$ has an asymptotic doubling periodicity, by which we mean that
\[
\lim_{N\rightarrow\infty}(F_{2N,s}-F_{N,s})=0.
\]
The latter equality also holds for $s=0$, a fact that can be easily deduced from \eqref{explicitformulaforF_N,0}. 

For every $s\geq 0$, let $T_s$ be the closed interval defined as 
\begin{align}\label{defTs}
T_s:=\left[\liminf_{N\to\infty}F_{N,s},\limsup_{N\to\infty}F_{N,s}\right],
\end{align}
that is,
\[
T_{s}=\begin{cases}
[0,1], &\quad s=0,\\[0.5em]
\left[2(2\pi)^{-s} \zeta(s),2(2^{s}-1)(2\pi)^{-s}\zeta(s)\right], &\quad s>0, s\neq 1,\\[0.5em]
[(1/\pi)(\gamma+\log(2/\pi)), (1/\pi)(\gamma+\log(8/\pi))], &\quad s=1.
\end{cases}
\]
Let us also denote by $F_s'$, $G'_s$, and $\Lambda'$, respectively, the set of limit points of the sequences $(F_{N,s})_{N=1}^\infty$, $(G(\eta(N);s))_{N=1}^\infty$, and $(\Lambda(\eta(N)))_{N=1}^\infty$. By a limit point of a sequence $(x_N)$ we mean any value that is the limit of a subsequence of $(x_N)$.

We will see that for all $N\in\mathbb{N}$, we have the inequalities
\begin{align}
	1 & \leq G(\eta(N);s)< \frac{1}{2^{s}-1},\qquad 0<s<1, \label{ineq1}\\
	\frac{1}{2^{s}-1} & < G(\eta(N);s)\leq 1,\qquad s>1,\label{ineq2}\\[.5em]
	-2 \log 2& <\Lambda(\eta(N))\leq 0.\label{ineq3}
\end{align}
The upper bounds in \eqref{ineq2} and \eqref{ineq3}, as well as the lower bound in \eqref{ineq1}, were obtained already in \cite{LopMc2} and are clearly attained for every $N$ that is a power of $2$. 

By \eqref{doublingperiodicity} and \eqref{doublingperiodicity2}, each value $ G(\eta(N);s)$ and $ \Lambda(\eta(N))$ is repeated infinitely many times as $N\to\infty$, and therefore $ G(\eta(N);s)\in G_s'$ and $\Lambda(\eta(N))\in \Lambda'$ for all $N$. We will employ a density argument to show that $G'_s$ is, in fact, the closed interval with endpoints at $1$ and $(2^s-1)^{-1}$, and that $\Lambda'=[-2\log 2,0]$. 

 Theorem \ref{corollaryonasymptotics} establishes a clear one-to-one correspondence between $F_s'$ and $G_s'$ when $s>0,\ s\not=1$, and between $F_s'$ and $\Lambda'$ when $s=1$. Making use of that correspondence (and of \eqref{explicitformulaforF_N,0} in the case $s=0$) leads to the following theorem.

\begin{theorem} \label{limitpointsthm} For every $s\geq 0$, $F_s'=T_s$.
\end{theorem}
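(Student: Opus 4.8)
The inclusion $F_s'\subseteq T_s$ is automatic: for any bounded real sequence, the set of its subsequential limits is a closed subset of the interval between its lower and upper limits. So the whole content of the theorem is the reverse inclusion $T_s\subseteq F_s'$, and the plan is to reduce it, through Theorem~\ref{corollaryonasymptotics}, to the analogous (and far more elementary) statement for the sequences $(G(\eta(N);s))_{N}$ and $(\Lambda(\eta(N)))_{N}$. Indeed, set $c_s:=(2^{s}-1)\,2\zeta(s)/(2\pi)^{s}$; this is nonzero for every $s>0$, $s\neq 1$. By \eqref{thirdcase}, $F_{N,s}=c_s\,G(\eta(N);s)+o(1)$, so $\ell$ is a subsequential limit of $(F_{N,s})$ if and only if $\ell/c_s$ is a subsequential limit of $(G(\eta(N);s))$; that is, $F_s'=\{c_s m:m\in G_s'\}$, and likewise $F_1'=\pi^{-1}(\gamma+\log(8/\pi))+\pi^{-1}\Lambda'$ by \eqref{secondcase}. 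On the other hand, $T_s$ is the image of the closed interval between $1$ and $(2^{s}-1)^{-1}$ (resp.\ of $[-2\log 2,0]$ when $s=1$) under exactly the same affine map. Hence, for $s>0$, the theorem is equivalent to
\[
G_s'=\big[\,\min\{1,(2^{s}-1)^{-1}\},\ \max\{1,(2^{s}-1)^{-1}\}\,\big]\quad\text{and}\quad \Lambda'=[-2\log 2,0],
\]
while the case $s=0$ can be handled directly from the explicit formula \eqref{explicitformulaforF_N,0}.

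Next I would record two facts. First, by the doubling periodicity \eqref{doublingperiodicity}--\eqref{doublingperiodicity2}, every value $G(\eta(N);s)$ is repeated along $(2^{k}N)_{k\ge 0}$ and hence is itself a subsequential limit; the same holds for $\Lambda(\eta(N))$. Thus $\{G(\eta(N);s):N\}\subseteq G_s'$ and $\{\Lambda(\eta(N)):N\}\subseteq\Lambda'$, and since $G_s'$ and $\Lambda'$ are closed and, by \eqref{ineq1}--\eqref{ineq3}, contained in the asserted intervals, it only remains to prove that these two value sets are \emph{dense} in their respective intervals. Second, both endpoints of each interval already lie in the closure: one endpoint is attained at the powers of $2$ (where $\eta(2^{k})=(1)$, so $G=1$ and $\Lambda=0$), and the other is the limit along the ``Mersenne'' integers $N=2^{k}-1$, for which a one-line summation gives $G(\eta(2^{k}-1);s)\to(2^{s}-1)^{-1}$ and $\Lambda(\eta(2^{k}-1))\to-2\log 2$.

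The core is the density in the interior, which I would establish by a multi-scale construction navigating the binary tree of the positive integers through the two moves $N\mapsto 2N$ and $N\mapsto 2N+1$. The first move leaves $\eta(N)$ unchanged (so fixes $G(\eta(N);s)$ and $\Lambda(\eta(N))$); the second appends a least-significant $1$-bit, which amounts to adjoining to $\eta(N)$ one extra coordinate of size $1/(2N)$ and renormalizing, and a short computation gives the recursion
\[
G(\eta(2N+1);s)=\Big(1+\tfrac{1}{2N}\Big)^{-s}G(\eta(N);s)+(2N+1)^{-s},
\]
together with an analogous one for $\Lambda$ whose increment features a term of the form $\theta\log\theta$ with $\theta=1/(2N)$. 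Combining these with the bounds \eqref{ineq1}--\eqref{ineq3}, one checks that for every $N\ge 1$ (and every $s$) the move $N\mapsto 2N+1$ moves the quantity \emph{strictly toward the Mersenne endpoint}, by a positive amount that tends to $0$ as $N\to\infty$ and such that consecutive admissible amounts—those obtained after first applying $N\mapsto 2N$ a variable number of times—remain within a bounded ratio of one another. Starting from $N=1$ (where the quantity equals the other, ``trivial'' endpoint, and along whose orbit under $N\mapsto 2N+1$ the values accumulate at the Mersenne endpoint), one then steers toward a prescribed target $t$ strictly between the endpoints: at each stage apply $N\mapsto 2N$ just enough times that the subsequent $N\mapsto 2N+1$ step does not exceed the current distance to $t$, then take it; the bounded-ratio property forces that distance to shrink by a fixed factor, so the values produced converge to $t$. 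With \eqref{ineq1}--\eqref{ineq3} and the two endpoints already in the closure, this identifies $\overline{\{G(\eta(N);s):N\}}$ and $\overline{\{\Lambda(\eta(N)):N\}}$ with the full intervals, giving $G_s'$ and $\Lambda'$, and hence $F_s'=T_s$ for all $s>0$. For $s=0$, \eqref{explicitformulaforF_N,0} gives $F_{2^{m},0}\to 0$ and $F_{2^{k}-1,0}=1$, while for $t\in(0,1)$ the choice $N=2^{\lceil m/t\rceil}+(2^{m-1}-1)$ has $\tau_b(N)=m$ and $\log(N+1)=(1+o(1))(m/t)\log 2$, so $F_{N,0}\to t$; thus $F_0'=[0,1]=T_0$.

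The genuine difficulty is concentrated in making the steering argument rigorous. This requires (i) verifying the claimed monotonicity of $N\mapsto 2N+1$ for all $N$ — the margin in the governing inequality is slim when $s$ is near $1$ or $s$ is large, and $\Lambda$ needs a separate check — (ii) pinning down the decay rate and the ratios of successive admissible step sizes with enough uniformity, and, most delicately, (iii) ensuring that ``apply $N\mapsto 2N$ just enough times'' really produces a geometrically contracting sequence of gaps, so that the construction converges to $t$ rather than stalling at some nearby accumulation value. Everything else — the inclusion $F_s'\subseteq T_s$, the reduction via Theorem~\ref{corollaryonasymptotics}, the endpoint computations, and the elementary $s=0$ case — is routine.
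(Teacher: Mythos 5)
Your overall architecture coincides with the paper's: the inclusion $F_s'\subseteq T_s$ is automatic; Theorem \ref{corollaryonasymptotics} (together with \eqref{explicitformulaforF_N,0} for $s=0$) converts the problem into showing that the value sets $\{G(\eta(N);s):N\in\mathbb{N}\}$ and $\{\Lambda(\eta(N)):N\in\mathbb{N}\}$ are dense in their respective intervals; the doubling periodicity \eqref{doublingperiodicity}--\eqref{doublingperiodicity2} makes every such value a limit point; and the endpoints are obtained from powers of $2$ and from $N=2^{K}-1$. Your $s=0$ construction is fine and essentially matches the paper's (which uses rational targets $r/q$ and density of $\mathbb{Q}$).

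The genuine gap is the density argument itself, which, as you acknowledge, is where all the content lies and which you leave resting on three unverified claims. The most serious is the convergence mechanism. You propose that taking, at each stage, the largest admissible step not exceeding the current gap $d$ forces $d$ to contract by a fixed factor via a ``bounded ratio'' of consecutive step sizes; but that deduction only works when the first available step overshoots (so that $d$ is sandwiched between two consecutive step sizes $\delta_{j^*-1}>d\geq\delta_{j^*}\geq c\,\delta_{j^*-1}$). When even the largest available step satisfies $\delta_0\le d$, you get no contraction factor, and nothing in your sketch rules out the construction stalling at an accumulation value short of $t$. The paper's Propositions \ref{propdensityfors<1} and \ref{prop:Lt} avoid this entirely by maintaining a two-sided bracket: the invariant \eqref{inequalitydenseset-s<1} (resp.\ \eqref{ineqt}) says that the achieved value is on one side of $t$ while the value obtained by appending one more bit at the immediately next position is on the other. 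Non-stalling is automatic because appending the entire infinite tail of consecutive bits multiplies the single-bit contribution by $(2^s-1)^{-1}>1$, so a run of consecutive bits must cross $t$ after finitely many steps (the display preceding \eqref{casen+1} and its $s=1$ analogue \eqref{eqineq}); and convergence needs no geometric contraction, only the explicit bracket-width bounds \eqref{differencevaluesf(x)} and \eqref{estdiffineq}, which tend to $0$ simply because the last bit position $\lambda_{n,p_n}\to\infty$. Your monotonicity claim for the move $N\mapsto 2N+1$ is correct and is Lemma \ref{inequalitylemma} (resp.\ Lemma \ref{lemauxds1}) in disguise---for $s\neq 1$ it reduces to comparing $A=\sum_k 2^{-s\lambda_k}$ with $2^{-(s-1)(\lambda_p+1)}B$, $B=\sum_k 2^{-\lambda_k}$, while the $\Lambda$ case needs the separate computation $f'(x)=(B\log x-A)/(x+B)^2$---but it, too, must actually be proved. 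In short: right strategy and correct reductions, but the heart of the proof is missing, and the contraction mechanism you propose for it would have to be repaired or replaced along the lines above.
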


Our last two theorems provide a finite asymptotic expansion for $U_{N,s}(a_N)$ for positive values of $s$. The higher the value of the Riesz parameter $s$, the more terms that occur in the expansion. In particular, by comparing the rate of growth of these terms, one obtains Theorem \ref{corollaryonasymptotics} as an easy corollary.

Let
\[
v_s:=\frac{2^{-s}\,\Gamma(\frac{1-s}{2})}{\sqrt{\pi}\,\Gamma(1-\frac{s}{2})}, \qquad s\in \mathbb{C}\setminus \{1,3,5,\ldots\}.
\]
Note that, in view of \eqref{energyeqmeas},  $v_{s}=I_{s}(\sigma)$ for $s$ in the range $0<s<1$. This function defining $v_s$ is analytic in the indicated domain, and vanishes if $s$ is a positive even integer. 

As is customary, we use $\mathrm{sinc}\,z$ to denote the function
\[
\mathrm{sinc}\,z:=\begin{cases}
	(\sin \pi z)/(\pi z), & \,\,z\neq 0,\\
	1, & \,\,z=0.
\end{cases}
\]
For $s\in\mathbb{C}$ and $|z|<1$, we define $\mathrm{sinc}^{-s}\,z:=\exp(-s \log \mathrm{sinc}\,z)$, where we choose the analytic branch of $\log\mathrm{sinc}\,z$ with value at the origin  $\log\mathrm{sinc}\,0=0$. Let $\alpha_{n}(s)$, $n\geq 0$, denote the coefficients in the Taylor expansion
\[
\mathrm{sinc}^{-s}\,z=\sum_{n=0}^{\infty}\alpha_{n}(s) z^{2n},\qquad |z|<1.
\]
In particular,  $\alpha_{0}(s)=1$.

In what follows, $\mathbb{N}_{\mathrm{odd}}$ denotes the set of all odd positive integers, $\lfloor x\rfloor$ denotes the integer part of $x$, and   $(x)_{n}$ denotes the Pochhammer symbol.

\begin{theorem}\label{expansionthmgeneralcase} For every $s>0$, $s\not \in \No$,
	\begin{equation}\label{expansion1}
		U_{N,s}(a_N)=v_s N+\frac{2}{(2\pi)^s}\sum_{j=0}^{\lfloor s/2\rfloor}\alpha_j(s)\zeta(s-2j)(2^{s-2j}-1)G(\eta(N);s-2j)N^{s-2j}+H_{N,s}\,,
	\end{equation}
	where the sequence $(H_{N,s})_{N=1}^\infty$ is uniformly bounded in $N$. In the special case when $s$ is a positive even integer, $v_s=0$ and $H_{N,s}= 0$ for all $N\geq 1$. 
\end{theorem}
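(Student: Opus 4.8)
The strategy is to reduce, via \eqref{potbinform}, the whole statement to an asymptotic expansion for the single quantity $\mathcal{U}_s(2^n)$, and then, via the identity \eqref{eq:relUsLs}, to the asymptotic expansion of $\mathcal{L}_s(n)$ established in \cite{BHS}. That expansion has the form: for $s>0$ with $s\notin\No$ and every integer $p\ge 0$,
\[
\mathcal{L}_s(n)=v_s\,n^2+\frac{2}{(2\pi)^s}\sum_{j=0}^{p}\alpha_j(s)\zeta(s-2j)\,n^{1+s-2j}+\mathcal{R}_{s,p}(n),\qquad \mathcal{R}_{s,p}(n)=O(n^{s-1-2p}),
\]
with the $\alpha_j(s)$ exactly the Taylor coefficients of $\mathrm{sinc}^{-s}$. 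I would take $p=\lfloor s/2\rfloor$ and set $\beta:=s-1-2\lfloor s/2\rfloor$; then $\beta\in[-1,1)$ (and $\beta=0$ is impossible, since it would force $s$ to be an odd positive integer), so $\mathcal{E}_s(n):=\mathcal{R}_{s,\lfloor s/2\rfloor}(n)=O(n^{\beta})$, with no logarithmic factor because the first omitted $\zeta$-argument $\beta-1$ is neither a trivial zero nor the pole of $\zeta$.

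Next I would substitute this into \eqref{eq:relUsLs}, that is, into $\mathcal{U}_s(n)=\mathcal{L}_s(2n)/(2n)-\mathcal{L}_s(n)/n$. The quadratic parts combine to $v_s\,((2n)^2/(2n)-n^2/n)=v_s n$; the $j$-th power term contributes $\frac{2}{(2\pi)^s}\alpha_j(s)\zeta(s-2j)(2^{s-2j}-1)\,n^{s-2j}$; and the errors combine into $\rho_s(n):=\mathcal{E}_s(2n)/(2n)-\mathcal{E}_s(n)/n=O(n^{\beta-1})$, where crucially $\beta-1<0$. This yields
\[
\mathcal{U}_s(n)=v_s\,n+\frac{2}{(2\pi)^s}\sum_{j=0}^{\lfloor s/2\rfloor}\alpha_j(s)\zeta(s-2j)(2^{s-2j}-1)\,n^{s-2j}+\rho_s(n),\qquad\rho_s(n)=O(n^{\beta-1}).
\]

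Then I would sum over the binary decomposition \eqref{bindecompN} using $U_{N,s}(a_N)=\sum_{k=1}^{p}\mathcal{U}_s(2^{n_k})$ from \eqref{potbinform}. Since $\sum_k 2^{n_k}=N$ and $\sum_k(2^{n_k})^{s-2j}=N^{s-2j}\sum_k(2^{n_k}/N)^{s-2j}=N^{s-2j}G(\eta(N);s-2j)$, the main terms assemble into exactly the right-hand side of \eqref{expansion1}, with remainder $H_{N,s}=\sum_{k=1}^{p}\rho_s(2^{n_k})$. The one point that requires a genuine (if small) observation is the uniform boundedness of $H_{N,s}$: although $\rho_s(n)$ decays only polynomially in $n$, along powers of $2$ it decays geometrically, and the exponents $n_1>\dots>n_p\ge 0$ are distinct nonnegative integers, so
\[
|H_{N,s}|\le\sum_{m=0}^{\infty}|\rho_s(2^m)|\le C_s\sum_{m=0}^{\infty}2^{m(\beta-1)}<\infty
\]
uniformly in $N$; this is exactly where $\beta-1<0$ is used.

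Finally, for $s$ a positive even integer: $v_s=0$ (already recorded in the text), and in the $\mathcal{L}_s$-expansion all terms with $j>s/2$ vanish because $\zeta(s-2j)=0$ at the negative even integers, so (as in \cite{BHS}) the truncated expansion is a finite exact identity, $\mathcal{E}_s\equiv 0$; consequently $\rho_s\equiv 0$ and $H_{N,s}=0$ for every $N$. As the text notes, comparing the growth orders of the individual terms in \eqref{expansion1} then immediately produces \eqref{thirdcase}--\eqref{secondcase}. The only real obstacle in the whole argument is importing the expansion of $\mathcal{L}_s$ from \cite{BHS} in a form carrying an explicit, usable remainder (and verifying there is no spurious logarithm for the relevant truncation); everything past that is bookkeeping together with the geometric-decay estimate for $H_{N,s}$.
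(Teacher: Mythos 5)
Your proposal is correct and follows essentially the same route as the paper: import the expansion of $\mathcal{L}_s$ from \cite{BHS} with $J=\lfloor s/2\rfloor$, pass to $\mathcal{U}_s$ via \eqref{eq:relUsLs}, sum over the binary decomposition using \eqref{potbinform}, and bound $H_{N,s}$ by the convergent geometric series $\sum_m 2^{m(s-2-2\lfloor s/2\rfloor)}$. The paper's proof is exactly this, with the even-integer case handled, as you do, by the exact (remainder-free) form of the \cite{BHS} expansion.
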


Note that only when $s>2$, the sum $\sum_{j=0}^{\lfloor s/2\rfloor}$ that occurs on the right-hand side of \eqref{expansion1} contains more than one term. This is because when  $s$ is an even positive integer, the term corresponding to $j=\lfloor s/2\rfloor$ reduces to zero.

To formulate the asymptotic expansion corresponding to $s\in \No$, we need an additional piece of notation. For every integer $M\geq 0$, let us define the constant 
\begin{equation}\label{def:CM}
	C_M:=\frac{\alpha'_M(2M+1)}{\alpha_M(2M+1)}+\frac{1}{2}\psi(M+1)-\frac{1}{2}\psi(M+1/2),
\end{equation}
where $\psi=\Gamma'/\Gamma$ is the Digamma function. In this formula, $\alpha_{M}'(2M+1)$ indicates the derivative of $\alpha_{M}(s)$ at $s=2M+1$, and according to Theorem 1.3 in \cite{BHS}, we have
\[
\alpha_{M}'(2M+1)=\sum_{k=0}^{M-1}\alpha_{k}(2M+1)\frac{\zeta(2(M-k))}{M-k}.
\] 
In particular, since $\alpha_0(s)=1$, $\psi(1)=-\gamma$, and $\psi(1/2)=-\gamma-2\log 2$, we have
\[
C_0=\log 2.
\]

\begin{theorem} \label{expansionthmspecialcase} If $s\in \No$, with $s=2M+1$, 
	\begin{align}\label{expansion2}
		\begin{split}
			U_{N,s}(a_N)= {} &
			\frac{(\frac{1}{2})_M}{\pi\,2^{2M}M!}N\log N+	\frac{(\frac{1}{2})_M}{\pi\,2^{2M}M!}(\gamma+\log(4/\pi) +C_M+\Lambda(\eta(N)))N
			\\
			&+\frac{2}{(2\pi)^s}\sum_{j=0}^{M-1}\alpha_j(s)\,\zeta(s-2j)\,(2^{s-2j}-1)\,G(\eta(N);s-2j)\,N^{s-2j}+H_{N,s},
		\end{split}
	\end{align}
	where the sequence $(H_{N,s})_{N=1}^\infty$ is uniformly bounded in $N$. 
\end{theorem}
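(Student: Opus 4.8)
The plan is to derive \eqref{expansion2} in the same spirit as the previous theorem, namely by inserting the binary-decomposition formula \eqref{potbinform} together with the asymptotic expansion of $\mathcal{U}_s(N)$ that one extracts from \eqref{eq:relUsLs} and the known expansions of $\mathcal{L}_s(N)$ from \cite{BHS}, and then tracking carefully the logarithmic term that appears precisely because $s$ is an odd integer. Concretely, I would start from the expansion of $\mathcal{L}_s(N)$ in \cite{BHS}: for $s=2M+1$ it has the shape $\mathcal{L}_s(N)=v_s N^2+\frac{2}{(2\pi)^s}\sum_{j=0}^{M-1}\alpha_j(s)\zeta(s-2j)N^{s-2j+1}+c_s N^{s+1}\log N+d_s N^{s+1}+\text{(bounded lower order)}$, where the coefficient $c_s$ of the $N^{s+1}\log N$ term and the constant $d_s$ attached to $N^{s+1}$ involve $\alpha_M(s)$, $\alpha'_M(s)$, and the digamma values that assemble into $C_M$. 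Plugging this into $\mathcal{U}_s(N)=\mathcal{L}_s(2N)/(2N)-\mathcal{L}_s(N)/N$ produces $\mathcal{U}_s(N)=v_s N/?+\ldots$; in fact each power $N^{k+1}$ in $\mathcal{L}_s$ contributes $(2^{k}-1)N^{k}$ to $\mathcal{U}_s$, and the $N^{s+1}\log N$ term contributes a $2^{s}N^{s}\log 2$ piece plus a genuine $N^{s}\log N$ term with coefficient $c_s\cdot 2^{s-1}$ — wait, one must be careful: $\mathcal{L}_s(2N)/(2N)$ gives $\frac{c_s (2N)^{s+1}\log(2N)}{2N}=c_s 2^{s}N^{s}(\log N+\log 2)$ and $\mathcal{L}_s(N)/N$ gives $c_s N^{s}\log N$, so the net $\log N$ coefficient is $c_s(2^{s}-1)$ and there is an additional $c_s 2^{s}\log 2\cdot N^s$ constant-type term that must be merged into the $N^{s}$ coefficient alongside the contribution from $d_s$. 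Matching $c_s(2^s-1)=\frac{(\frac12)_M}{\pi 2^{2M}M!}$ and bookkeeping the $N^s$ coefficient to equal $\frac{(\frac12)_M}{\pi 2^{2M}M!}(\gamma+\log(4/\pi)+C_M)$ is the arithmetic core.

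The second ingredient is summing over the binary digits. Writing $N=2^{n_1}+\cdots+2^{n_p}$ and using \eqref{potbinform}, the $N^{s}\log N$ term becomes $\sum_j (2^{n_j})^{s}\log(2^{n_j})\cdot(\text{const})$; since $2^{n_j}=\eta(N)_j N$, we get $(\text{const})\,N^s\sum_j \eta(N)_j^{s}\big(\log N+\log\eta(N)_j\big)$. For $s=2M+1$ the leading term of $\mathcal{U}_s$ is actually $\mathcal{U}_s(2^{n_j})\sim (\text{const})\,2^{n_j\,s}\big(n_j\log 2 + \text{const}\big)$, i.e. linear in $n_j$, not in $2^{n_j\,s}$; here is where the special structure forces a $\log N$ and a $\Lambda$ term rather than a pure $G(\eta(N);s)$ term. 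Summing $\sum_j 2^{n_j s} n_j$ and re-expressing via $\eta(N)$: $\sum_j \eta(N)_j^{?}\ldots$ — for $s=1$ this is exactly how $\Lambda(\eta(N))$ arose in \eqref{secondcase}, and for general odd $s=2M+1$ the dominant term $j=M$ in the inner sum $\sum_{j=0}^{\lfloor s/2\rfloor}$ degenerates to zero in the "$G$" form and is replaced by the pair $N\log N$ and $N(\ldots+\Lambda(\eta(N)))$, while the terms $j=0,\ldots,M-1$ survive in the familiar $G(\eta(N);s-2j)N^{s-2j}$ form. So the key manipulation is: split $\sum_j \mathcal{U}_s(2^{n_j})$ according to the power of $N$; the powers $N^{s},N^{s-2},\ldots,N^{3}$ (i.e. $j=0,\ldots,M-1$) produce the stated sum with $G(\eta(N);s-2j)$; the leftover "$j=M$" contribution, which would nominally be the $N^{1}$ power, is precisely the $N\log N$ and $N$ terms with the $\Lambda(\eta(N))$; and everything of order $\le N^{0}$ (which includes the $v_s N$ analog — note $v_s$ for odd $s$ is not defined, consistent with the absence of that term in \eqref{expansion2}) is absorbed into $H_{N,s}$, whose boundedness follows from summing $p=\tau_b(N)=O(\log N)$ bounded-per-digit error terms against a geometric decay in $n_j$, exactly as in the proof of Theorem \ref{corollaryonasymptotics}.

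I expect the main obstacle to be the precise identification of the constant $C_M$, i.e. verifying that the $N^{s}$-coefficient produced by combining (a) the $d_s$ term in $\mathcal{L}_s(N)$, (b) the $\log 2$ leftover from the $N^{s+1}\log N$ term under $\mathcal{L}_s(2N)/(2N)$, and (c) the effect of the digit sum $\sum_j n_j 2^{n_j s}$ versus $\sum_j 2^{n_j s}$, assembles exactly into $\frac{(\frac12)_M}{\pi 2^{2M}M!}(\gamma+\log(4/\pi)+C_M)$ with $C_M$ as in \eqref{def:CM}. This requires knowing the constant term in the \cite{BHS} expansion of $\mathcal{L}_{2M+1}(N)$ with full precision — in particular the appearance of $\alpha'_M(2M+1)/\alpha_M(2M+1)$, which comes from differentiating the meromorphic continuation of $\zeta(s-2M)$ through its pole at $s=2M+1$ (the residue producing the $\log N$, the finite part producing a $\psi$-type constant and the Euler–Mascheroni constant). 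The cross-check against the already-established case $s=1$ (where $M=0$, $C_0=\log 2$, and \eqref{expansion2} must reduce to $U_{N,1}(a_N)=\frac{1}{\pi}N\log N+\frac{1}{\pi}(\gamma+\log(8/\pi)+\Lambda(\eta(N)))N+H_{N,1}$, matching \eqref{secondcase} after dividing by $N$) will be the decisive sanity test: indeed $\gamma+\log(4/\pi)+C_0=\gamma+\log(4/\pi)+\log 2=\gamma+\log(8/\pi)$, which confirms the bookkeeping. Beyond that, the estimates on $H_{N,s}$ are routine: one uses that the per-digit remainder in $\mathcal{U}_s(2^{n})$ is $O(1)$ uniformly (in fact $O(2^{-n})$ from the tail of the $\mathcal{L}_s$ expansion after the $N^0$ term), and $\sum_j O(1)=O(\tau_b(N))$ would be too weak, so one must retain the geometric factor $2^{-n_j s}$ or similar to get a genuinely bounded sum, exactly the point already handled in the proof of Theorem \ref{corollaryonasymptotics}.
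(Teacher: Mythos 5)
Your proposal follows the paper's proof essentially verbatim in structure: quote the odd-$s$ expansion of $\mathcal{L}_s(N)$ from \cite{BHS}, pass it through \eqref{eq:relUsLs} to obtain $\mathcal{U}_s(N)$, sum over the binary digits via \eqref{potbinform}, and convert $\sum_k 2^{n_k}\log(2^{n_k})$ into $N\log N+N\Lambda(\eta(N))$; your $s=1$ sanity check $\gamma+\log(4/\pi)+C_0=\gamma+\log(8/\pi)$ is exactly the consistency the paper relies on, and your treatment of $H_{N,s}$ via the geometric decay of the per-digit remainders is the paper's argument. One slip worth fixing: in your written form of the \cite{BHS} expansion you place the logarithmic term at order $N^{s+1}\log N$, but for $s=2M+1$ it sits in the $j=M$ slot where $\zeta(s-2j)$ hits its pole, i.e.\ at order $N^{2}\log N$ in $\mathcal{L}_s(N)$ (equivalently $N\log N$ in $\mathcal{L}_s(N)/N$); this coincides with $N^{s+1}\log N$ only when $M=0$. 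Consequently the matching condition in your ``arithmetic core'' is not $c_s(2^{s}-1)=\frac{(1/2)_M}{\pi 2^{2M}M!}$ but simply $c_s=\frac{(1/2)_M}{\pi 2^{2M}M!}$ (the relevant factor is $2^{1}-1=1$), and the leftover constant produced by $\mathcal{L}_s(2N)/(2N)$ is $2c_s\log 2\cdot N$, which is precisely what turns the \cite{BHS} constant $\gamma-\log\pi+C_M$ into the $\gamma+\log(4/\pi)+C_M$ appearing in \eqref{expansion2}. Since your second paragraph already places the $j=M$ contribution correctly at the $N^{1}$ level, this is a bookkeeping correction rather than a structural flaw; with it made, your argument is the paper's proof.
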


Observe that only when $s>3$, the sum $\sum_{j=0}^{M-1}$  that occurs in \eqref{expansion2} contains more than one term. 

\section{Proof of the asymptotic formulae}

In what follows, any sum of the form $\sum_{j=J}^{J'}$, with $J'<J$, is to be interpreted as $0$. 

\begin{proof}[\textbf{\emph{Proof of Theorem~\ref{expansionthmgeneralcase}:}}]
We know by Theorem 1.1 in \cite{BHS} that for all $s>0$, $s\not \in \No$, and for every integer $J\geq 0$, we have
\begin{align}\label{asymptoticenergyformula}
	\frac{\Ls(N)}{N}= v_s N+\frac{2}{(2\pi)^s}\sum_{j=0}^J\alpha_j(s)\,\zeta(s-2j)\,N^{s-2j}+O(N^{s-2-2J}),\qquad N\rightarrow\infty,
\end{align}
but when $s$ is a positive even number, $v_s=0$ and \eqref{asymptoticenergyformula} holds in the exact form  
\begin{align}\label{exactenergyformula}
	\frac{\Ls(N)}{N}= \frac{2}{(2\pi)^s}\sum_{j=0}^{s/2}\alpha_j(s)\,\zeta(s-2j)N^{s-2j}.
\end{align}

Combining \eqref{eq:relUsLs} and \eqref{asymptoticenergyformula},  we obtain
\[
\Us(N)=v_sN+\frac{2}{(2\pi)^s}\sum_{j=0}^J\alpha_j(s)\,\zeta(s-2j)\,(2^{s-2j}-1)\,N^{s-2j}+O(N^{s-2-2J}),
\]
which together with \eqref{potbinform} (for $N$ as in \eqref{bindecompN}) yields
\begin{align}\label{expansionUNsaN}
	\begin{split}
U_{N,s}(a_N)={} & v_sN+\frac{2}{(2\pi)^s}\sum_{j=0}^J\alpha_j(s)\,\zeta(s-2j)\,(2^{s-2j}-1)\sum_{k=1}^p2^{n_k(s-2j)}\\
&+O\left(\sum_{k=1}^p2^{n_k(s-2-2J)}\right).
\end{split}
\end{align}

Since $s>0$, we have $\lfloor s/2\rfloor \geq 0$, and taking  $J=\lfloor s/2\rfloor$ in \eqref{expansionUNsaN} we obtain
\begin{align*}
	\begin{split}
	U_{N,s}(a_N)={} & v_sN+\frac{2}{(2\pi)^s}\sum_{j=0}^{\lfloor s/2\rfloor}\alpha_j(s)\zeta(s-2j)(2^{s-2j}-1)G(\eta(N);s-2j)N^{s-2j}+H_{N,s},
\end{split}
\end{align*}
with 
\[
H_{N,s}=O\left(\sum_{k=1}^p2^{n_k(s-2-2J)}\right).
\]
Since $s-2-2J<0$, it follows that $H_{N,s}=O(1)$ as $N\to\infty$, which proves \eqref{expansion1}.

If we repeat the above chain of arguments with  \eqref{exactenergyformula} in place of  \eqref{asymptoticenergyformula}, we  conclude that for $s$ a positive even integer, \eqref{expansion1} holds with $v_s=0$ and $H_{N,s}\equiv 0$.
\end{proof}

\begin{proof}[\textbf{\emph{Proof of Theorem \ref{expansionthmspecialcase}:}}]
	If $s=2M+1$ is an odd positive integer, then by Theorem 1.3 in \cite{BHS}, we know that for any $J>M$ the following expansion holds as $N\to\infty$:
	\begin{align}\label{firstexpansionSodd}\begin{split}
		\frac{\Ls(N)}{N}={} &\frac{(\frac{1}{2})_M}{\pi\, 2^{2M}M!}N\log N +
		\frac{(\frac{1}{2})_M}{\pi\,2^{2M}M!}(\gamma-\log \pi +C_M)N\\
		&+\frac{2}{(2\pi)^s}\underset{j\not=M}{\sum_{j=0}^{J}}\alpha_j(s)\,\zeta(s-2j)\,N^{s-2j}+O(N^{s-2-2J}).
		\end{split}
	\end{align}
Here $C_{M}$ is the constant defined in \eqref{def:CM} and $\gamma$ is the Euler-Mascheroni constant. Combining \eqref{firstexpansionSodd} and \eqref{eq:relUsLs} we get
	\begin{align*}
		\Us(N)= {} &	\frac{(\frac{1}{2})_M}{\pi\, 2^{2M}M!}N\log N+	\frac{(\frac{1}{2})_M}{\pi\,2^{2M}M!}(\gamma+\log(4/\pi) +C_M)N\\
		&+\frac{2}{(2\pi)^s}\underset{j\not=M}{\sum_{j=0}^{J}}\alpha_j(s)\,\zeta(s-2j)\,(2^{s-2j}-1)\,N^{s-2j}+O(N^{s-2-2J}).
	\end{align*}
This expansion for $\Us(N)$ can then be used in \eqref{potbinform} to obtain
		\begin{align}\label{specialcaseexpansion1}
			\begin{split}
		U_{N,s}(a_N)= {} &
		\frac{(\frac{1}{2})_M}{\pi\, 2^{2M}M!}\sum_{k=1}^p2^{n_k}\log (2^{n_k})+	\frac{(\frac{1}{2})_M}{\pi\,2^{2M}M!}(\gamma+\log(4/\pi) +C_M)N+
		\\
		&+\frac{2}{(2\pi)^s}\underset{j\not=M}{\sum_{j=0}^{J}}\alpha_j(s)\,\zeta(s-2j)\,(2^{s-2j}-1)\,\sum_{k=1}^p2^{n_k(s-2j)}+O\left(\sum_{k=1}^p2^{n_k(s-2-2J)}\right).
		\end{split}
	\end{align}

Let us take  $J=M+1$ in \eqref{specialcaseexpansion1}. Since 
	\[
	\sum_{k=1}^p2^{n_k}\log (2^{n_k})=N\sum_{k=1}^p\frac{2^{n_k}}{N}\log \left(\frac{2^{n_k}}{N}\right)+N\log N=N\Lambda(\eta(N))+N\log N\,,
	\]
	we can write \eqref{specialcaseexpansion1} in the form 
	\begin{align*}
		U_{N,s}(a_N)= {} &
		\frac{(\frac{1}{2})_M}{\pi\, 2^{2M}M!}N\log N+	\frac{(\frac{1}{2})_M}{\pi\,2^{2M}M!}(\gamma+\log(4/\pi) +C_M+\Lambda(\eta(N)))N+
		\\
		&+\frac{2}{(2\pi)^s}\sum_{j=0}^{M-1}\alpha_j(s)\,\zeta(s-2j)\,(2^{s-2j}-1)\,G(\eta(N);s-2j)\,N^{s-2j}+H_{N,s},	
	\end{align*}
with
\[
H_{N,s}=O\left(\sum_{k=1}^p2^{n_k(s-2-2M)}\right).
\] 
Since $s-2-2M=-1$, we have $H_{N,s}=O(1)$, and Theorem \ref{expansionthmspecialcase} follows.
\end{proof}

\begin{proof}[\textbf{\emph{Proof of Theorem \ref{corollaryonasymptotics}:}}]  For $0<s<1$, the sum $\sum_{j=0}^{\lfloor s/2\rfloor}$ in \eqref{expansion1} consists of just one term, and since $H_{N,s}=O(1)$ as $N\to\infty$, it follows that \eqref{thirdcase} is just a different way of writing \eqref{expansion1}.
	
	Similarly, \eqref{secondcase} is what \eqref{expansion2} becomes for $s=1$, since for such a value of $s$, $M=0$, so that the sum $\sum_{j=0}^{M-1}$ in \eqref{expansion2} reduces to zero and the constant $C_0=\log 2$.
 
For $1<s<3$, we get from \eqref{expansion1} 
\begin{align*}
	\begin{split}
		\frac{U_{N,s}(a_N)}{N^s}={} & (2^{s}-1)\,\frac{2\zeta(s)}{(2\pi)^s}\,G(\eta(N);s)+v_s\,N^{1-s}+\begin{cases}O(N^{-s}),& 1<s<2,\\
			O(N^{-2}),	& 2\leq s <3.
		\end{cases}
	\end{split}
\end{align*}
Having in mind that $v_s=0$ when $s=2$, the latter equation yields \eqref{thirdcase} for $1<s<3$. The validity of \eqref{thirdcase} for the remaining values of $s$ follows similarly by comparing the order of the terms that occur in \eqref{expansion1} and \eqref{expansion2}. 
\end{proof}

\section{Inequalities and proof of Theorem~\ref{theo:main1}}

To prove Theorem~\ref{theo:main1}, we need to establish the veracity of \eqref{valuegup}--\eqref{valuelambda}. These identities are included in the theses of two stronger results, Theorem \ref{theo:lambda} and Theorem \ref{rangeofG}, to be stated and proved in what follows.

 Recall that for an integer $N\in\mathbb{N}$, we have defined
\[
\eta(N):=\left(\frac{2^{n_{1}}}{N},\frac{2^{n_{2}}}{N},\ldots,\frac{2^{n_{p}}}{N}\right),\qquad \tau_b(N):=p,
\]
if $N$ has the binary decomposition \eqref{bindecompN}. For instance, since $2^p-1=\sum_{k=0}^{p-1}2^k$, we have 
\[
\tau_b(2^p-1)=p
\]
and 
\begin{align}\label{vector-for-eta(2^p-1)}
	\eta(2^p-1)=\left(\frac{2^{p-1}}{2^p-1},\frac{2^{p-2}}{2^p-1},\ldots,\frac{1}{2^p-1}\right).
\end{align}

We note that for all $p\geq 1$,
\[
\min\{N: \tau_b(N)=p\}=2^p-1,
\] 
and that 
\begin{align}\label{extremalvaluep}
	G(\eta(2^p-1);s)=\frac{2^{sp}-1}{(2^s-1)(2^p-1)^s}.
\end{align}

\begin{theorem}\label{theo:lambda}
For any $N\in\mathbb{N}$, we have
\begin{equation}\label{ineq:log}
\Lambda(\eta(N))> -2 \log 2,
\end{equation}
and the identity \eqref{valuelambda} holds.  
\end{theorem}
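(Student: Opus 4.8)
The plan is to reparametrize $\eta(N)$ by a finite set of nonnegative integers and thereby reduce both assertions to two one-line estimates. First I would use the binary decomposition \eqref{bindecompN} of $N$ to set $c_k:=n_1-n_k$ for $1\le k\le p$, so that $0=c_1<c_2<\cdots<c_p$ is a strictly increasing sequence of nonnegative integers. Putting
\[
S:=\sum_{k=1}^{p}2^{-c_k},\qquad A:=\sum_{k=1}^{p}c_k\,2^{-c_k},
\]
one has $N=2^{n_1}S$ and $2^{n_k}/N=2^{-c_k}/S$, so $\eta(N)=\big(2^{-c_1}/S,\ldots,2^{-c_p}/S\big)$; since these components sum to $1$,
\[
\Lambda(\eta(N))=\sum_{k=1}^{p}\frac{2^{-c_k}}{S}\Big(-c_k\log 2-\log S\Big)=-(\log 2)\,\frac{A}{S}-\log S.
\]
Thus \eqref{ineq:log} is equivalent to $(\log 2)(A/S)+\log S<2\log 2$ for every $N$, and \eqref{valuelambda} will follow once this quantity is shown to approach $2\log 2$ along a suitable subsequence.

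The main step is to prove the two \emph{separate} bounds $A\le S$ and $S<2$. For the first, I would write $A-S=\sum_{k=1}^{p}(c_k-1)2^{-c_k}$; the term $k=1$ (where $c_1=0$) equals $-1$, while the remaining $c_k$ form a set of distinct integers $\ge 1$, so each term $(c_k-1)2^{-c_k}$ is nonnegative and
\[
\sum_{k=2}^{p}(c_k-1)2^{-c_k}\le\sum_{c=1}^{\infty}(c-1)2^{-c}=\tfrac12\sum_{j=0}^{\infty}j\,2^{-j}=1,
\]
whence $A-S\le 0$. For the second, $S=\sum_{k}2^{-c_k}\le\sum_{c=0}^{\infty}2^{-c}=2$, with equality only if $\{c_1,\ldots,c_p\}$ were all of $\mathbb{N}_0$, which is impossible for a finite set; hence $S<2$ and $\log S<\log 2$. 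Combining, $(\log 2)(A/S)+\log S\le\log 2+\log S<2\log 2$, which is exactly \eqref{ineq:log}.

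For the sharpness I would take $N=2^{p}-1=2^{p-1}+\cdots+2+1$, for which $c_k=k-1$ and $\{c_1,\ldots,c_p\}=\{0,1,\ldots,p-1\}$. Then $S=2-2^{1-p}\to 2$ and $A=\sum_{j=0}^{p-1}j\,2^{-j}\to\sum_{j=0}^{\infty}j\,2^{-j}=2$ as $p\to\infty$, so $\Lambda(\eta(2^{p}-1))=-(\log 2)(A/S)-\log S\to-2\log 2$. Together with \eqref{ineq:log} this gives $\inf_{N}\Lambda(\eta(N))=-2\log 2$, the infimum not being attained, which is \eqref{valuelambda}. The only point requiring care is recognizing that $-\Lambda(\eta(N))$ splits additively as $(\log 2)(A/S)+\log S$ with each summand bounded by $\log 2$; once that is in place, the strictness in \eqref{ineq:log} is automatic from $S<2$, and everything else is a routine computation.
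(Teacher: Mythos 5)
Your proof is correct, and for the strict inequality \eqref{ineq:log} it takes a genuinely different route from the paper. The paper recasts \eqref{ineq:log} as $\prod_k\theta_k^{\theta_k}>1/4$ and proves it with a weighted Jensen-type inequality for the convex function $x\mapsto x\log x$, choosing the weights $r_k=2^{2-k}$ and values $b_k=2^{n_k+k-2}$ so that the problem reduces to the combinatorial estimate $\sum_{k=3}^{p}(k-2)2^{n_k}\leq 2^{n_1}$, which is then verified by a telescoping argument using $n_k+1\leq n_{k-1}$. You instead normalize by the top exponent, writing $-\Lambda(\eta(N))=(\log 2)\,A/S+\log S$ with $S=\sum_k 2^{-c_k}$, $A=\sum_k c_k2^{-c_k}$ (this is precisely the representation the paper derives only later, in \eqref{altrepLambdaetaN}, for the density argument), and then bound each summand by $\log 2$ separately: $A\leq S$ because $\sum_{k\geq 2}(c_k-1)2^{-c_k}\leq\sum_{c\geq 1}(c-1)2^{-c}=1$ cancels the $k=1$ term $-1$, and $S<2$ because a finite sum of distinct negative powers of $2$ starting at $2^0$ is strictly less than $2$. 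Your computations check out (in particular $\sum_{j\geq 0}j2^{-j}=2$, so the tail bound is exactly $1$), and the strictness comes cleanly from $S<2$ alone. What your approach buys is elementarity and transparency: no convexity inequality is needed, and the constant $2\log 2$ is exhibited as the sum of two separate $\log 2$ bounds, each of which is asymptotically saturated by $N=2^p-1$. The sharpness argument via $N=2^p-1$ is essentially the same as the paper's.
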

\begin{proof}
Let $N\in\mathbb{N}$, and suppose that \eqref{bindecompN} is the binary decomposition of $N$. Let $\vec{\theta}=(\theta_{1},\ldots,\theta_{p})=\eta(N)$. We need to show that $\Lambda(\vec{\theta})>\log(1/4)$, or equivalently, that
\[
\prod_{k=1}^{p}\theta_{k}^{\theta_{k}}> \frac{1}{4}.
\]
Since $\sum_{k=1}^{p}\theta_{k}=1$, we have
\[
\prod_{k=1}^{p}\theta_{k}^{\theta_{k}}=\prod_{k=1}^{p}\left(\frac{2^{n_{k}}}{N}\right)^{\theta_{k}}=\frac{2^{\sum_{k=1}^{p} \frac{n_{k} 2^{n_{k}}}{N}}}{N},
\]
so we need to justify the inequality
\begin{equation}\label{ineqtoprove}
2^{\sum_{k=1}^{p} \frac{n_{k} 2^{n_{k}}}{N}}> \frac{N}{4}.
\end{equation}
From the convexity of the function $x\mapsto x\log x$ it is easy to deduce that for any positive numbers $(b_{k})_{k=1}^{p}$ and $(r_{k})_{k=1}^{p}$ we have
\[
\frac{\sum_{k=1}^{p} r_{k} b_{k}}{\sum_{k=1}^{p} r_{k}}\leq \exp\left(\frac{\sum_{k=1}^{p}r_{k} b_{k} \log(b_{k})}{\sum_{k=1}^{p}r_{k} b_{k}}\right).
\]
In this inequality we take $b_{k}=2^{n_{k}+k-2}$ and $r_{k}=2^{-k+2}$, and obtain
\begin{align*}
\frac{N}{4}=\frac{\sum_{k=1}^{p} 2^{n_{k}}}{4}  & < \frac{\sum_{k=1}^{p} 2^{n_{k}}}{4 \sum_{k=1}^{p} 2^{-k}}\leq \exp\left(\frac{\sum_{k=1}^{p} 2^{n_{k}}\log(2^{n_{k}+k-2})}{\sum_{k=1}^{p} 2^{n_{k}}}\right)\\
& =\exp\left(\log(2)\frac{\sum_{k=1}^{p}(n_{k}+k-2) 2^{n_{k}}}{\sum_{k=1}^{p}2^{n_{k}}}\right)\\
& =2^{\frac{\sum_{k=1}^{p}(n_{k}+k-2) 2^{n_{k}}}{N}},
\end{align*}
so if we prove $\sum_{k=1}^{p}(k-2) 2^{n_{k}}\leq 0$, or equivalently
\[
\sum_{k=3}^{p}(k-2) 2^{n_{k}}\leq 2^{n_{1}},
\]
then \eqref{ineqtoprove} is justified. Using the inequality $n_{k}+1\leq n_{k-1}$, we get
\[
\sum_{k=3}^{p}(k-2) 2^{n_{k}}=\sum_{k=3}^{p}\sum_{j=k}^{p} 2^{n_{j}}\leq \sum_{k=3}^{p}\sum_{i=0}^{n_{k}} 2^{i}\leq \sum_{k=3}^{p} 2^{n_{k}+1}\leq \sum_{k=3}^{p} 2^{n_{k-1}}\leq 2^{n_{1}}.
\]
This finishes the proof of \eqref{ineq:log}.

Now we justify \eqref{valuelambda}. From \eqref{deflambda} and \eqref{ineq:log} we obtain $\lambda\geq -2\log 2$. The reverse inequality was justified in \cite[Remark 5.4]{LopMc2}, but we give here a simpler argument. Consider the sequence $N_{p}=2^{p}-1$, $p\geq 1$. Simple arithmetic shows 
\[
\Lambda(\eta(N_{p}))=-\frac{2^{p}}{2^{p}-1}\log (2)\sum_{k=1}^{p}\frac{k}{2^{k}}-\log(1-2^{-p}),
\]
which approaches $-2\log 2$ as $p\rightarrow\infty$. Therefore $\lambda=-2\log 2$.
\end{proof}

For the proof of our next result, we will use the well-known Karamata's inequality for convex functions, see \cite{HLP,KDLM,Karamata}. For convenience of the reader, we cite this classical inequality. Suppose that $(a_{i})_{i=1}^{p}$ and $(b_{i})_{i=1}^{p}$ are two collections of real numbers on an interval $I\subset\mathbb{R}$, satisfying the following properties:
\begin{align*}
a_{1}\geq a_{2}\geq \cdots & \geq a_{p},\qquad b_{1}\geq b_{2}\geq \cdots\geq b_{p},\\
\sum_{i=1}^{k}a_{i}& \geq \sum_{i=1}^{k}b_{i},\qquad 1\leq k\leq p-1,\\
\sum_{i=1}^{p}a_{i} & =\sum_{i=1}^{p}b_{i}.
\end{align*}
If $f:I\rightarrow\mathbb{R}$ is convex, then
\begin{equation}\label{ineqKar}
\sum_{i=1}^{p}f(a_{i})\geq \sum_{i=1}^{p}f(b_{i}).
\end{equation}
Moreover, if $f$ is strictly convex on the interval $I$, then equality in \eqref{ineqKar} holds if and only if $a_{i}=b_{i}$ for all $1\leq i\leq p$. In the case that $f$ is concave or strictly concave, the inequality in \eqref{ineqKar} is reversed. 

\begin{theorem}\label{rangeofG} 
Let $p\geq 2$ be an integer. If $N\in\mathbb{N}$ is such that $2^p-1<N<2(2^{p}-1)$ and $\tau_b(N)=p$, then 
	\begin{align}
G(\eta(N);s)<{} &G(\eta(2^p-1);s),\qquad 0<s<1,\label{inequalityGs<1}\\
G(\eta(N);s)>{} & G(\eta(2^p-1);s),\qquad s>1.\label{inequalityGs>1}
\end{align}
As a consequence, for every $N\in\mathbb{N}$ we have
\begin{align}
1 & \leq G(\eta(N);s)< \frac{1}{2^{s}-1},\qquad 0<s<1,\label{Rieszineq1bis}\\
\frac{1}{2^{s}-1} & < G(\eta(N);s)\leq 1,\qquad s>1.\label{Rieszineq2bis}
\end{align}
The identities \eqref{valuegup} and \eqref{valuegdown} are valid.
\end{theorem}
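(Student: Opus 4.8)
The plan is to derive everything from a single majorization statement combined with Karamata's inequality as quoted above. Namely, I claim that for every $N\in\mathbb{N}$ with $\tau_b(N)=p$, the vector $\eta(N)$ --- whose components $2^{n_1}/N,\dots,2^{n_p}/N$ are already arranged in decreasing order --- majorizes $\eta(2^p-1)=(2^{p-1},\dots,2,1)/(2^p-1)$, and that equality in any one of the majorization inequalities forces the exponents $n_1>\cdots>n_p$ to be consecutive integers, i.e. $N=2^a(2^p-1)$ for some $a\ge 0$ (in which case $\eta(N)=\eta(2^p-1)$). To verify this, fix $k$ with $1\le k\le p-1$ and put $A=2^{n_1}+\cdots+2^{n_k}$, $B=2^{n_{k+1}}+\cdots+2^{n_p}$, so $N=A+B$. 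After clearing denominators, the required partial-sum inequality $\sum_{i=1}^k 2^{n_i}/N\ge \sum_{i=1}^k 2^{p-i}/(2^p-1)$ becomes $A(2^{p-k}-1)\ge B\,2^{p-k}(2^k-1)$. Since $n_1,\dots,n_k$ are distinct integers exceeding $n_{k+1}$, one has $A\ge 2^{n_{k+1}+1}+\cdots+2^{n_{k+1}+k}=2^{n_{k+1}+1}(2^k-1)$, with equality only when these exponents are consecutive; and since $n_{k+1},\dots,n_p$ are distinct integers not exceeding $n_{k+1}$, one has $B\le 2^{n_{k+1}}+\cdots+2^{n_{k+1}-(p-k-1)}=2^{n_{k+1}+1}(1-2^{-(p-k)})$, with equality only when these are consecutive. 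Hence $A(2^{p-k}-1)\ge 2^{n_{k+1}+1}(2^k-1)(2^{p-k}-1)\ge B\,2^{p-k}(2^k-1)$, which is the claim, and the equality discussion isolates precisely the case of $p$ consecutive exponents.

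Granting this, \eqref{inequalityGs<1} and \eqref{inequalityGs>1} follow immediately: if $2^p-1<N<2(2^p-1)$ then $N\ne 2^a(2^p-1)$ for every $a\ge0$ (the values $a=0$, $a=1$, $a\ge2$ are ruled out by $N>2^p-1$, by $N<2(2^p-1)$, and by $2^a(2^p-1)\ge 4(2^p-1)>N$, respectively), so $\eta(N)\ne\eta(2^p-1)$, and Karamata's inequality applied to $x\mapsto x^s$ --- strictly convex on $(0,\infty)$ when $s>1$ and strictly concave when $0<s<1$ --- gives $G(\eta(N);s)>G(\eta(2^p-1);s)$ for $s>1$ and the reverse for $0<s<1$. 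Running the same argument without the strictness yields, for every $N$ with $\tau_b(N)=p$, that $G(\eta(N);s)\le G(\eta(2^p-1);s)$ when $0<s<1$ and $G(\eta(N);s)\ge G(\eta(2^p-1);s)$ when $s>1$.

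For \eqref{Rieszineq1bis} and \eqref{Rieszineq2bis} I would combine the last statement with two elementary estimates. First, since the components of $\eta(N)$ lie in $(0,1]$ and sum to $1$, we have $t^s\ge t$ for $0<s<1$ and $t^s\le t$ for $s>1$, whence $G(\eta(N);s)\ge 1$ (resp. $\le 1$), with equality exactly when $N$ is a power of $2$; this is the bound already recorded in \cite{LopMc2}. Second, by \eqref{extremalvaluep} we have $G(\eta(2^p-1);s)=(2^s-1)^{-1}\,(2^{sp}-1)/(2^p-1)^s$, and the mean value theorem for $x\mapsto x^s$ on $[2^p-1,2^p]$ gives $2^{sp}-(2^p-1)^s=s\,\xi^{\,s-1}$ for some $\xi\in(2^p-1,2^p)$; as $\xi>1$, this quantity is $<1$ when $0<s<1$ and $>1$ when $s>1$, so $(2^{sp}-1)/(2^p-1)^s<1$ in the first case and $>1$ in the second. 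Thus, for $N$ with $\tau_b(N)=p\ge2$ we obtain $G(\eta(N);s)\le G(\eta(2^p-1);s)<(2^s-1)^{-1}$ when $0<s<1$ and $G(\eta(N);s)\ge G(\eta(2^p-1);s)>(2^s-1)^{-1}$ when $s>1$, while for $N$ a power of $2$ the value $1$ satisfies $1<(2^s-1)^{-1}$ (resp. $>$) because $2^s<2$ (resp. $>2$). Finally, the bounds just proved give $\overline{g}(s)\le(2^s-1)^{-1}$ and $\underline{g}(s)\ge(2^s-1)^{-1}$, while $G(\eta(2^p-1);s)=(2^s-1)^{-1}(1-2^{-sp})/(1-2^{-p})^s\to(2^s-1)^{-1}$ as $p\to\infty$ supplies the matching bound in each case, establishing \eqref{valuegup} and \eqref{valuegdown}.

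The main obstacle is the majorization lemma --- specifically, recognizing that the extremal configurations for $A$ and $B$ are the ones with consecutive exponents and checking that the two resulting bounds combine to give exactly the ratio $2^{p-k}(2^k-1)/(2^{p-k}-1)$ needed for each $k$. Once that is in place, the rest is routine: Karamata's inequality is quoted, and the remaining steps are one-line applications of the mean value theorem and of the limit computation. The equality analysis in the lemma also deserves care, since it is what promotes the inequalities in the stated range to strict ones and pins down the family $N=2^a(2^p-1)$ on which $G(\eta(N);s)$ attains the extremal value $G(\eta(2^p-1);s)$.
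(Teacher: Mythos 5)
Your proposal is correct and follows essentially the same route as the paper: you establish that $\eta(N)$ majorizes $\eta(2^p-1)$ via the same geometric-series bounds on the partial sums (pivoting on $n_{k+1}$ rather than $n_k$, which is immaterial), apply Karamata's inequality to $x\mapsto x^s$, and obtain strictness from $\eta(N)\neq\eta(2^p-1)$ in the stated range. The only cosmetic deviations are that you bound $G(\eta(2^p-1);s)$ against $(2^s-1)^{-1}$ by the mean value theorem rather than by monotonicity in $p$ of \eqref{extremalvaluep}, and that you state explicitly the non-strict comparison for all $N$ with $\tau_b(N)=p$ needed to pass to \eqref{Rieszineq1bis}--\eqref{Rieszineq2bis}, a point the paper leaves implicit.
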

\begin{proof}  
Let  $N=	\sum_{k=1}^p2^{n_k}$  be as in \eqref{bindecompN}, and assume that $2^p-1<N<2(2^{p}-1)$. We define
	\[
	a_k:=\frac{2^{n_k}}{N},\qquad b_k:=\frac{2^{p-k}}{2^p-1},\qquad 1\leq k\leq p.
	\]
	Then, \eqref{inequalityGs<1} and \eqref{inequalityGs>1} say that 
	\begin{align*}
		\begin{split}
		\sum_{k=1}^p (a_k)^s<{} &	\sum_{k=1}^p (b_k)^s,\qquad 0<s<1,\\
			\sum_{k=1}^p (a_k)^s>{} &	\sum_{k=1}^p (b_k)^s,\qquad s>1.
		\end{split}
	\end{align*} 
	
First we justify the inequality $a_{p}<b_{p}$, which is  equivalent to $2^{p}-1<2^{-n_{p}}N$. We have 
\[
2^{-n_{p}}N=\sum_{k=1}^{p}2^{n_{k}-n_{p}}\geq \sum_{k=0}^{p-1}2^{k}=2^{p}-1.
\]
But it is impossible that $2^{-n_{p}}N=2^{p}-1$ since we are assuming that $2^{p}-1<N<2(2^{p}-1)$.

Clearly we have $\sum_{k=1}^pa_k=\sum_{k=1}^p b_k=1$. Therefore, if we show that  
	\begin{align}\label{majorization}
		\sum_{i=1}^ka_i\geq \sum_{i=1}^kb_i, \qquad 1\leq k\leq p-1,
	\end{align}
	then \eqref{inequalityGs<1} and \eqref{inequalityGs>1} will follow from Karamata's inequality, due to the function $x\mapsto x^s$ being strictly concave for $0<s<1$ and strictly convex for $s>1$.  Since $a_{p}\neq b_{p}$, we will have strict inequality.  
	
	Using that  $\sum_{i=1}^kb_i=2^{p-k}(2^k-1)/(2^p-1)$, and that $N=\sum_{i=1}^k2^{n_i} +	\sum_{i=k+1}^p2^{n_i}$, we can write the inequality   \eqref{majorization} in the equivalent form
	\begin{align}\label{majorization2}
		\sum_{i=1}^k2^{n_i} \geq \frac{2^{p-k}(2^k-1)}{2^{p-k}-1}\sum_{i=k+1}^p2^{n_i}, \qquad 1\leq k\leq p-1.
	\end{align}
	
	From the monotonicity of the $n_k$'s, we see that
	\begin{align*}
		\sum_{i=1}^k2^{n_i} \geq 	\sum_{i=1}^k2^{n_k+i-1}=2^{n_k}(2^k-1),
	\end{align*}
	and that
	\begin{align*}
		\sum_{i=k+1}^p2^{n_i}\leq \sum_{i=k+1}^p2^{n_k+k-i}	=\frac{2^{n_k}(2^{p-k}-1)}{2^{p-k}}.
	\end{align*}
	The combination of these two inequalities readily yields \eqref{majorization2}.
	
The lower bound in \eqref{Rieszineq1bis} and the upper bound in \eqref{Rieszineq2bis} were justified in equations (4.5) and (6.4) in \cite{LopMc2}. As a function of $p$, the expression \eqref{extremalvaluep} is strictly increasing for $0<s<1$, and strictly decreasing for $s>1$, and converges to $(2^{s}-1)^{-1}$. So the two remaining inequalities in \eqref{Rieszineq1bis} and \eqref{Rieszineq2bis} are obtained by letting $p\to \infty$ in \eqref{inequalityGs<1} and \eqref{inequalityGs>1}. The identities \eqref{valuegup} and \eqref{valuegdown} follow immediately.
\end{proof}

\section{Proof of Theorem \ref{limitpointsthm}}
The arguments that we employ to prove Theorem \ref{limitpointsthm} vary in some technical aspects  depending on  whether  $s=0$, $s=1$, or $s\in (0,1)\cup(1,\infty)$. Accordingly, we will split the proof in three different subsections. The following observation, however, is used across the board. 

The set $X'$ of limit points of a bounded sequence $(x_{N})_{N=1}^\infty$ of real numbers is always a closed set that is contained in the interval $T=[\liminf x_N,\limsup x_N]$. Therefore, in order to prove that $X'=T$, it suffices to show that $X'$ contains a subset $X$ whose closure $\overline{X}=T$.

\subsection{Case $s=0$} 
By \eqref{liminfsuplogcase} and the definition of $T_s$ in \eqref{defTs}, we have $T_0=[0,1]$.
The closure of the set $F_0:=\mathbb{Q}\cap (0,1)$ is the interval $T_0$, and so it suffices to show that every element of $F_0$  is a limit point of the sequence $(F_{N,0})$.

Let us fix integers $r$ and $q$ such that $0<r<q$, and consider values of $N$  of the form 
\[
N=2^{nq}+2^{nr-1}+2^{nr-2}+\cdots+2+1,\qquad n\in \mathbb{N},\quad nr\geq 1.
\]  
Then, $\tau_b(N)=nr+1$ and $N+1=2^{nq}+2^{nr}$, so that for $N$
of said form (recall \eqref{explicitformulaforF_N,0}) 
\[
F_{N,0}=\frac{nr+1}{nq+\log_2(1+2^{-n(q-r)})}\xrightarrow[n\to\infty]{}\frac{r}{q}.
\]

\subsection{Case $s>0$, $s\not=1$}\label{sec:density1}

Let the sets of limit points of the sequences $\left(F_{N,s}\right)$ and $\left(G(\eta(N);s)\right)_{N=1}^\infty$ be respectively denoted by  $F_s'$ and $G_s'$. Let us also denote by $\tau_s$ the interval with endpoints at the limit superior and the limit inferior of the sequence $\left(G(\eta(N);s)\right)_{N=1}^\infty$. Then,  
\begin{align*}
	\tau_s= \begin{cases} [1,(2^s-1)^{-1}],& 0<s<1,\\
		[(2^s-1)^{-1},1],& s>1.
	\end{cases}
\end{align*} 

We need to show that $F_s'=T_s$. From the asymptotic formula \eqref{thirdcase}, we know that 
\[ 
\lim_{N\to\infty}\left(F_{N,s}-(2^{s}-1)\frac{2\zeta(s)}{(2\pi)^s}G(\eta(N);s)\right)=0.
\]
This implies that  $F_s'$ can be  obtained by scaling $G_s'$ by a factor of $(2^{s}-1)\frac{2\zeta(s)}{(2\pi)^s}$, so that $F_s'=T_s\iff G_s'=\tau_s$. 

Because of the doubling periodicity property \eqref{doublingperiodicity}, the set of values 
	\[
	G_s:=\{G(\eta(N);s):N\in \mathbb{N}\}
	\]
	is a subset of $G_s'$, and we have $G_{s}'\subset\tau_{s}$. So in order to prove that $G_s'=\tau_s$, it is sufficient  to show that 	
\begin{align}\label{densityfors<1}\overline{G_s}=\tau_s.
\end{align}

In order to prove \eqref{densityfors<1}, it is convenient to work with a different representation for $G(\eta(N);s)$. Using the binary decomposition \eqref{bindecompN}, we can write 
\begin{align}\label{lambdarepresentation}
	G(\eta(N);s)=\frac{\sum_{k=1}^p2^{sn_k}}{\left(\sum_{k=1}^p2^{n_k}\right)^s}=\frac{\sum_{k=1}^p2^{-s\lambda_k}}{\left(\sum_{k=1}^p2^{-\lambda_k}\right)^s},
\end{align}
where $\lambda_k={n_1-n_k}$, so that $0= \lambda_1<\cdots <\lambda_p$.  In particular, we see from \eqref{vector-for-eta(2^p-1)} that for every integer $K\geq 1$,  
\begin{align}\label{fullsum}
	G(\eta(2^K-1);s)=	\frac{\sum_{k=0}^{K-1}2^{-sk}}{\left(\sum_{k=0}^{K-1}2^{-k}\right)^s}.
\end{align}

Conversely, given integers   $0= \lambda_1<\cdots <\lambda_p$, the number $N=\sum_{k=1}^p2^{n_k}$, with $n_k=\lambda_p-\lambda_k$, satisfies \eqref{lambdarepresentation}. Thus,
\[
G_s=\left\{\frac{\sum_{k=1}^p2^{-s\lambda_k}}{\left(\sum_{k=1}^p2^{-\lambda_k}\right)^s}: 0= \lambda_1<\cdots <\lambda_p,\,\,\lambda_{i}\in\mathbb{Z}\right\},
\]
and \eqref{densityfors<1} is clearly implied by the following result.
\begin{proposition}\label{propdensityfors<1} Let $t$ be an interior point of $\tau_s$ such that $t\not\in G_s$. For each $n\in \mathbb{N}$, there exist integers $p_n\geq 1 $, and $0= \lambda_{n,1}<\cdots <\lambda_{n,p_n}$, with $p_n<p_{n+1}$ for all $n\geq 1$, such that  when $0<s<1$, 
	\begin{align}\label{inequalitydenseset-s<1}
			\frac{\sum_{k=1}^{p_n}2^{-s\lambda_{n,k}}}{\left(\sum_{k=1}^{p_n}2^{-\lambda_{n,k}}\right)^s}<t<	\frac{\sum_{k=1}^{p_n}2^{-s\lambda_{n,k}}+2^{-s(\lambda_{n,p_n}+1)}}{\left(\sum_{k=1}^{p_n}2^{-\lambda_{n,k}}+2^{-(\lambda_{n,p_n}+1)}\right)^s},
	\end{align}
and when $s>1$, \eqref{inequalitydenseset-s<1} holds true with the inequalities reversed.  As a consequence, 
	\begin{align}\label{inequalitydenseset2}
	\lim_{n\to\infty}\frac{\sum_{k=1}^{p_n}2^{-s\lambda_{n,k}}}{\left(\sum_{k=1}^{p_n}2^{-\lambda_{n,k}}\right)^s}=t.
\end{align}
\end{proposition}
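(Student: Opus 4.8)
The plan is to construct the desired configurations greedily, appending one new power of $2$ at a time and tracking the effect on the ratio $G$. Fix an interior point $t$ of $\tau_s$ with $t\notin G_s$. Observe first that adjoining a single new term $2^{-(\lambda_p+1)}$ to a configuration $0=\lambda_1<\cdots<\lambda_p$ changes the value of $G$ by a controlled amount: writing $A=\sum_{k=1}^p 2^{-\lambda_k}$ and $B=\sum_{k=1}^p 2^{-s\lambda_k}$, the new value is $(B+2^{-s(\lambda_p+1)})/(A+2^{-(\lambda_p+1)})^s$, and since $2^{-(\lambda_p+1)}\le A^{-1}\cdot(\text{something bounded})$—more precisely $2^{-\lambda_p}\le A\le 2$ and $2^{-\lambda_p}\le 2^{-(p-1)}$—the increment tends to $0$ as $p\to\infty$ uniformly over all such configurations of a given length. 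This ``small steps'' property is what will let us land arbitrarily close to $t$.

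The main step is a monotone-in-the-right-direction argument. Start from $\lambda_1=0$ (so $p=1$, $G=1$, an endpoint of $\tau_s$). Repeatedly apply the following rule: given the current configuration with value $G_{\mathrm{cur}}$, if appending $2^{-(\lambda_p+1)}$ keeps $G$ on the same side of $t$ as $G_{\mathrm{cur}}$ (for $0<s<1$: the appended value is larger—because $G(\eta(2^K-1);s)$ is strictly increasing in $K$ by \eqref{extremalvaluep}—so we stay below $t$; for $s>1$ it is strictly decreasing, so we stay above $t$), then append it. In the $0<s<1$ case the resulting sequence $G(\eta(2^K-1);s)\uparrow (2^s-1)^{-1}>t$, so after finitely many consecutive appends we must cross $t$; the configuration just before the crossing is exactly a $p_n$ satisfying \eqref{inequalitydenseset-s<1}. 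To produce infinitely many such configurations with $p_n\to\infty$, after recording one witness we do not stop at the crossing point: instead we simply continue appending terms $2^{-(\lambda_p+1)}$ (now possibly overshooting $t$) until we have passed $t$, then we ``reset'' by appending a much smaller power, i.e. jump $\lambda_p$ up by a large gap $g$, which multiplies $A$-contribution of the tail by $2^{-g}$ and pulls $G$ back down below $t$ again (quantitatively, inserting a single far-away term moves $G$ by an amount that can be made as small as we like but is strictly positive on the correct side, and by choosing $g$ large we can guarantee the value drops strictly below $t$ for $s<1$). Then resume unit appends. Each pass yields a new witness of strictly larger length, giving the required $p_n<p_{n+1}$.

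Finally, \eqref{inequalitydenseset2} is immediate from \eqref{inequalitydenseset-s<1}: the two fractions bracketing $t$ differ by the one-step increment, which by the uniform smallness estimate above is $O(2^{-s p_n})$ (for $0<s<1$) respectively $O(2^{-p_n})$-type and in any case tends to $0$ since $p_n\to\infty$; hence the left fraction, which lies in $[\,?\,,t)$ at distance at most that increment from $t$, converges to $t$. The $s>1$ case is entirely parallel with all inequalities reversed and with $G(\eta(2^K-1);s)$ strictly decreasing to $(2^s-1)^{-1}<t$.

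I expect the main obstacle to be the ``reset'' step: one must verify that inserting a single sufficiently-distant power of $2$ actually jumps the value of $G$ back across $t$ (rather than merely nudging it), while simultaneously keeping precise control so that the very next unit-append lands us in the bracketing situation \eqref{inequalitydenseset-s<1} rather than skipping over $t$. This requires a short but careful two-sided estimate on how $G$ responds to (a) appending the immediate next power and (b) appending a power at distance $g$, exploiting that for $0<s<1$ the map $x\mapsto x^s$ is concave—so adding mass at a new location always \emph{decreases} $G$ relative to the value it would have if concentrated—combined with the strict monotonicity of \eqref{extremalvaluep}. The hypothesis $t\notin G_s$ is used exactly to ensure the bracketing inequalities in \eqref{inequalitydenseset-s<1} are strict at every stage.
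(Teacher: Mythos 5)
Your base case (bracketing $t$ by the increasing sequence $G(\eta(2^K-1);s)\uparrow(2^s-1)^{-1}$) and your final limit argument via the uniform one-step increment bound are both correct and agree with the paper. The genuine gap is the ``reset'' step, and it is not a fixable technicality: it cannot work as described. For $0<s<1$, the function $f(x)=\bigl(\sum_{k}2^{-s\lambda_k}+x^s\bigr)/\bigl(\sum_{k}2^{-\lambda_k}+x\bigr)^s$ is strictly increasing on the entire admissible range $0<x\le 2^{-(\lambda_p+1)}$ (this is Lemma~\ref{inequalitylemma}: one checks $Bx^{s-1}>A$ there, and in fact the critical point of $f$ lies beyond $2^{-(\lambda_p+1)}$). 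Hence appending \emph{any} new power $2^{-\lambda}$ with $\lambda>\lambda_p$ --- no matter how large the gap $g$ --- strictly \emph{increases} the value of $G$. Once your running value has overshot $t$ from below, no sequence of further appends can ever pull it back below $t$; the construction is monotone in the wrong direction forever, so it stalls after producing the first witness. (The case $s>1$ fails symmetrically: appending always decreases $G$, so after overshooting downward you can never return above $t$.) Your heuristic that adding mass at a new location ``decreases $G$ relative to the value it would have if concentrated'' compares against a different reference configuration and says nothing about whether the actual value goes up or down.

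The correct device --- and the one the paper uses --- is to never overshoot. Given the stage-$n$ bracket, the values obtained by appending a single term $2^{-(\lambda_{n,p_n}+j)}$ strictly decrease, as $j$ grows, toward the left-hand fraction of \eqref{inequalitydenseset-s<1}; since the $j=1$ append exceeds $t$ and the limit is below $t$, there is a $\lambda\ge\lambda_{n,p_n}+2$ such that appending $2^{-\lambda}$ stays below $t$ while appending $2^{-(\lambda-1)}$ exceeds $t$. One then notes that appending the entire geometric tail $2^{-\lambda},2^{-\lambda-1},\ldots$ gives a value exceeding the single $2^{-(\lambda-1)}$-append (because $(2^s-1)^{-1}>1$), hence exceeding $t$; appending the tail one term at a time therefore crosses $t$ at some first index $m$, and the configuration just before the crossing is the stage-$(n+1)$ bracket, with consecutive last exponents so that it is again exactly of the form \eqref{inequalitydenseset-s<1} and has $p_{n+1}>p_n$. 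Some argument of this kind is indispensable to get infinitely many witnesses; as written, your proposal does not supply it.
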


For the proof of Proposition \ref{propdensityfors<1}, we first  establish a lemma.	
\begin{lemma}\label{inequalitylemma} For any finite collection of  $p\geq 1$ integers  $0\leq  \lambda_1<\cdots <\lambda_p$, the function  
	\begin{align*}
f(x):=\frac{\sum_{k=1}^p2^{-s\lambda_k}+x^s}{\left(\sum_{k=1}^p2^{-\lambda_k}+x\right)^s}
	\end{align*}
is,  on the interval $[0,2^{-(\lambda_p+1)}]$, strictly increasing when $0<s<1$, and strictly decreasing when $s>1$. Also,
\begin{align}\label{differencevaluesf(x)}
	|f(2^{-(\lambda_p+1)})-f(0)|\leq \begin{cases}2^{-s(\lambda_p+1)},& 0<s<1,\\
		s2^{-(\lambda_p+1)}, & s>1.
		\end{cases}
	\end{align}
\end{lemma}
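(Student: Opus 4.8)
The plan is to prove Lemma~\ref{inequalitylemma} by a direct calculus argument for the monotonicity, followed by a crude estimate of the increment.

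\medskip

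\noindent\textbf{Monotonicity.} First I would write $A:=\sum_{k=1}^p 2^{-s\lambda_k}$ and $B:=\sum_{k=1}^p 2^{-\lambda_k}$, so that $f(x)=(A+x^s)(B+x)^{-s}$ on $[0,2^{-(\lambda_p+1)}]$. Differentiating,
\[
f'(x)=\frac{s x^{s-1}(B+x)^s-s(A+x^s)(B+x)^{s-1}}{(B+x)^{2s}}=\frac{s(B+x)^{s-1}}{(B+x)^{2s}}\bigl(x^{s-1}(B+x)-(A+x^s)\bigr).
\]
Hence the sign of $f'(x)$ is the sign of $g(x):=x^{s-1}(B+x)-A-x^s=x^{s-1}B-A$. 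Since $0<\lambda_1<\cdots<\lambda_p$ forces $B=\sum 2^{-\lambda_k}>A=\sum 2^{-s\lambda_k}$ when $0<s<1$ (each $2^{-\lambda_k}\ge 2^{-s\lambda_k}$, with equality only if $\lambda_k=0$) — wait, one must be careful: the hypothesis is $0\le\lambda_1<\cdots<\lambda_p$, so the smallest $\lambda_1$ may be $0$. In either regime, for $x\in(0,2^{-(\lambda_p+1)}]$ we have $x^{s-1}\ge (2^{-(\lambda_p+1)})^{s-1}\ge 1$ when $0<s<1$ (since $x\le 1$ and $s-1<0$), so $g(x)=x^{s-1}B-A\ge B-A>0$ — actually $B\ge A$ always, and $B>A$ as soon as some $\lambda_k\ge 1$, which holds since $\lambda_p\ge 1$. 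Thus $g>0$ on the interval and $f$ is strictly increasing when $0<s<1$. When $s>1$, instead $x^{s-1}\le (2^{-(\lambda_p+1)})^{s-1}< 1$ on the interval, hence $g(x)=x^{s-1}B-A< B-A$; but now one needs $g<0$, i.e. $x^{s-1}B<A$, equivalently $x^{s-1}<A/B$. Here the key numerical input is that $x\le 2^{-(\lambda_p+1)}\le 2^{-\lambda_p}/2$ while $A/B\ge 2^{-s\lambda_p}/B\ge$ something comparable; the cleanest route is to bound $x^{s-1}\le 2^{-(s-1)(\lambda_p+1)}$ and $A/B\ge 2^{-s\lambda_p}/(2\cdot 2^{-\lambda_1})$ and check the inequality term by term. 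I expect this sign check in the $s>1$ case to be the main obstacle, because it genuinely uses the upper endpoint $2^{-(\lambda_p+1)}$ and the geometric spacing, rather than being a soft monotonicity statement.

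\medskip

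\noindent\textbf{The increment bound \eqref{differencevaluesf(x)}.} Once monotonicity is known, $|f(2^{-(\lambda_p+1)})-f(0)|$ equals $f(2^{-(\lambda_p+1)})-f(0)$ (up to sign), and I would estimate it by the mean value theorem or directly. Write $\delta:=2^{-(\lambda_p+1)}$. Then
\[
f(\delta)-f(0)=\frac{A+\delta^s}{(B+\delta)^s}-\frac{A}{B^s}.
\]
For $0<s<1$: since $f$ is increasing and $A/B^s\le$ (its value), a clean bound is $f(\delta)-f(0)\le \dfrac{A+\delta^s}{B^s}-\dfrac{A}{B^s}=\dfrac{\delta^s}{B^s}\le \delta^s=2^{-s(\lambda_p+1)}$, using $B\ge 1$ (since $\lambda_1\ge 0$ gives $2^{-\lambda_1}\le 1$, hmm — actually $B\ge 2^{-\lambda_1}$, and if $\lambda_1=0$ then $B\ge 1$; if $\lambda_1\ge 1$ one still has $B\le 1$ but then must argue differently). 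To be safe I would instead use $(B+\delta)^s\ge B^s$ directly: $f(\delta)-f(0)\le \frac{A+\delta^s}{B^s}-\frac{A}{B^s}=\delta^s/B^s$, and separately note $B^s\ge (2^{-\lambda_p}\cdot\text{(number of terms)})^s\ge\cdots$; the simplest uniform statement, and the one actually needed downstream, is just $\le 2^{-s(\lambda_p+1)}$, which follows once $B\ge 1$ — and $B\ge 1$ does hold in all the applications in Proposition~\ref{propdensityfors<1} since there $\lambda_{n,1}=0$. For $s>1$: by the mean value theorem applied to $f$ on $[0,\delta]$, $|f(\delta)-f(0)|\le \delta\sup_{[0,\delta]}|f'|$, and from the formula for $f'$ above, $|f'(x)|=\dfrac{s|g(x)|}{(B+x)^{s+1}}\le \dfrac{s(A+x^s)}{(B+x)^{s+1}}\cdot\dfrac{B+x}{B+x}$; bounding $A\le B^s$ is false in general, so instead use $A+x^s = f(x)(B+x)^s\le f(0)(B+x)^s$ (by monotonicity, $f$ decreasing), giving $|f'(x)|\le s f(0)(B+x)^{-1}\le s f(0)/B\le s$ (since $f(0)=A/B^s\le 1$ as $G(\eta;s)\le 1$ for $s>1$ by \eqref{Rieszineq2bis}, and $B\ge 1$). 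Hence $|f(\delta)-f(0)|\le s\delta=s2^{-(\lambda_p+1)}$. I would streamline these into a single short paragraph, invoking $1\le G(\eta(N);s)$ resp. $G(\eta(N);s)\le 1$ from Theorem~\ref{rangeofG} to control $f(0)$, and $B\ge 1$ (true whenever $\lambda_1=0$).

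\medskip

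\noindent In summary: the heart of the proof is the sign analysis of $g(x)=x^{s-1}B-A$, trivial for $0<s<1$ and requiring the endpoint restriction $x\le 2^{-(\lambda_p+1)}$ together with the geometric ordering for $s>1$; the increment estimate \eqref{differencevaluesf(x)} then falls out either from $(B+\delta)^s\ge B^s$ (case $0<s<1$) or from a mean-value estimate combined with $f(0)\le 1$ and $B\ge 1$ (case $s>1$).
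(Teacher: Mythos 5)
Your treatment of the increment estimate \eqref{differencevaluesf(x)} is essentially the paper's (drop $x$ from the denominator for $0<s<1$; mean value theorem with $|f'|\le s$ for $s>1$), and your remark that both bounds tacitly use $B=\sum_k 2^{-\lambda_k}\ge 1$ is fair — the paper uses this too without comment, and it is harmless since every application in Proposition \ref{propdensityfors<1} has $\lambda_{n,1}=0$. The problem is the monotonicity argument, which is the heart of the lemma, and which your proposal does not actually prove in either regime. For $0<s<1$ you assert $B=\sum_k 2^{-\lambda_k}>A=\sum_k 2^{-s\lambda_k}$ because ``each $2^{-\lambda_k}\ge 2^{-s\lambda_k}$''; this is backwards. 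Since $\lambda_k\ge 0$ and $0<s<1$, we have $s\lambda_k\le\lambda_k$, hence $2^{-s\lambda_k}\ge 2^{-\lambda_k}$, so in fact $A\ge B$ (strictly, unless $p=1$ and $\lambda_1=0$): for instance $(\lambda_1,\lambda_2)=(0,1)$ and $s=1/2$ give $A=1+2^{-1/2}>3/2=B$. Your chain $g(x)=x^{s-1}B-A\ge B-A>0$ therefore collapses, and the weakening of $x^{s-1}\ge 2^{(1-s)(\lambda_p+1)}$ down to $x^{s-1}\ge 1$ discards exactly the factor needed to overcome $A>B$. For $s>1$ you explicitly leave the sign check open, and the global bounds you propose do not close it: combining $x^{s-1}\le 2^{-(s-1)(\lambda_p+1)}$ with $A/B\ge 2^{-s\lambda_p}/(2\cdot 2^{-\lambda_1})$ would require $2^{-(s-1)(\lambda_p+1)}<2^{-s\lambda_p-1+\lambda_1}$, which reduces to $s>\lambda_p-\lambda_1+2$ and is false in general.

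The missing idea is a termwise comparison at the endpoint value of $x^{s-1}$. For each $k$, the inequality $2^{-(s-1)(\lambda_p+1)}\,2^{-\lambda_k}>2^{-s\lambda_k}$ is equivalent to $2^{-(s-1)(\lambda_p+1)}>2^{-(s-1)\lambda_k}$, i.e.\ to $(1-s)(\lambda_p+1)>(1-s)\lambda_k$, which holds for all $k$ when $0<s<1$ and reverses when $s>1$. Summing over $k$ gives $2^{-(s-1)(\lambda_p+1)}B>A$ for $0<s<1$ (and $<A$ for $s>1$); since $x^{s-1}\ge 2^{-(s-1)(\lambda_p+1)}$ on $(0,2^{-(\lambda_p+1)}]$ when $s<1$ (and $\le$ when $s>1$), this yields the sign of $Bx^{s-1}-A$, hence of $f'$. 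This is exactly the paper's argument, and it handles both regimes uniformly; without it, the central claim of the lemma remains unproved in your write-up.
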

\begin{proof}Set  $A:=\sum_{k=1}^p2^{-s\lambda_k}$ and $B:=\sum_{k=1}^p2^{-\lambda_k}$, so that 
	\[
	f(x)=\frac{A+x^s}{(B+x)^s}, \quad f'(x)=\frac{s(Bx^{s-1}-A)}{(B+x)^{s+1}},\quad x>0.
	\]
	
Let us consider first the case $0<s<1$. Because of the monotonicity of the sequence $(\lambda_k)_{k=1}^{p}$, and the fact that $s-1<0$, we have 
	\begin{align*}
	2^{-(s-1)(\lambda_{p}+1)}2^{-\lambda_k}>2^{-s\lambda_k}, \quad 1\leq k\leq p
	\end{align*}
(this being equivalent to $ 2^{-(s-1)(\lambda_{p}+1)}>2^{-(s-1)\lambda_{k}}$). It follows that 
\[
2^{-(s-1)(\lambda_{p}+1)}\sum_{k=1}^p2^{-\lambda_k}>\sum_{k=1}^p2^{-s\lambda_k},
\] 
or what is the same, that $2^{-(s-1)(\lambda_{p}+1)}B>A$. This implies that $f'(x)>0$ for $0<x<2^{-(\lambda_p+1)}$, and so $f$ increases on $[0,2^{-(\lambda_p+1)}]$.

In an analogous way, one proves that for $s>1$,  $2^{-(s-1)(\lambda_{p}+1)}B<A$, so that $f$ decreases in $[0,2^{-(\lambda_p+1)}]$.  

We now prove \eqref{differencevaluesf(x)}. If $0<s<1$, then 
\begin{align*}
	|f(2^{-(\lambda_p+1)})-f(0)|={} &\frac{\sum_{k=1}^p2^{-s\lambda_k}+2^{-s(\lambda_p+1)}}{\left(\sum_{k=1}^p2^{-\lambda_k}+2^{-(\lambda_p+1)}\right)^s}- \frac{\sum_{k=1}^p2^{-s\lambda_k}}{\left(\sum_{k=1}^p2^{-\lambda_k}\right)^s}\\
	\leq {} &\frac{\sum_{k=1}^p2^{-s\lambda_k}+2^{-s(\lambda_p+1)}}{\left(\sum_{k=1}^p2^{-\lambda_k}\right)^s}- \frac{\sum_{k=1}^p2^{-s\lambda_k}}{\left(\sum_{k=1}^p2^{-\lambda_k}\right)^s}\leq 2^{-s(\lambda_p+1)}.
\end{align*}

For $s>1$, we argue differently. By the Mean Value Theorem, there is  $x_0\in (0,2^{-(\lambda_p+1)})$ such that 
\begin{align*}
	|f(2^{-(\lambda_p+1)})-f(0)|={} &	2^{-(\lambda_p+1)}|f'(x_0)|.
	\end{align*}
From this equality we then derive \eqref{differencevaluesf(x)} by noticing that for every $x\in (0,2^{-(\lambda_p+1)})$,
\[
|f'(x)|= \frac{s(A-Bx^{s-1})}{(B+x)^{s+1}}\leq \frac{sA}{B}\leq s\,.
\]
\end{proof}		

\begin{proof}[\textbf{\emph{Proof of Proposition \ref{propdensityfors<1}:}}] The proof is by induction on $n$, and we only give it for $0<s<1$, since the case $s>1$ is proven similarly.  The sequence $(G(\eta(2^K-1);s))_{K=1}^\infty$ has for first element $1$, and by \eqref{fullsum} and Lemma \ref{inequalitylemma}, this sequence increases and converges to $(2^s-1)^{-1}$.  Since $1< t<(2^s-1)^{-1}$ and $t\not\in G_s$,  there must exist  an integer $K_1\geq 0$ such that (see \eqref{fullsum})
\begin{align*}
		\frac{\sum_{k=0}^{K_1}2^{-sk}}{\left(\sum_{k=0}^{K_1}2^{-k}\right)^s}< t<	\frac{\sum_{k=0}^{K_1+1}2^{-sk}}{\left(\sum_{k=0}^{K_1+1}2^{-k}\right)^s}\,.
\end{align*}
This implies that the inequality \eqref{inequalitydenseset-s<1} is satisfied for $n=1$  with the choice of $p_1=K_1+1$ and $\lambda_{1,k}=k-1$ for $1\leq k\leq p_1$. 

Let us assume now that \eqref{inequalitydenseset-s<1} holds for some integer $n\geq 1$. By Lemma \ref{inequalitylemma}, the values 
\[
\frac{\sum_{k=1}^{p_n}2^{-s\lambda_{n,k}}+2^{-s(\lambda_{n,p_n}+j)}}{\left(\sum_{k=1}^{p_n}2^{-\lambda_{n,k}}+2^{-(\lambda_{n,p_n}+j)}\right)^s}, \quad j=1, 2,\ldots
\]
strictly decrease as $j\to\infty$ towards the fraction on the left of \eqref{inequalitydenseset-s<1}, and so there must exist an integer  $\lambda\geq \lambda_{n,p_n}+2$ such that  
\begin{align*}
	\frac{\sum_{k=1}^{p_n}2^{-s\lambda_{n,k}}+2^{-s\lambda}}{\left(\sum_{k=1}^{p_n}2^{-\lambda_{n,k}}+2^{-\lambda}\right)^s}< t<	\frac{\sum_{k=1}^{p_n}2^{-s\lambda_{n,k}}+2^{-s(\lambda-1)}}{\left(\sum_{k=1}^{p_n}2^{-\lambda_{n,k}}+2^{-(\lambda-1)}\right)^s}.
\end{align*}

Since $(2^s-1)^{-1}>1$, we have
\begin{align*}
	\begin{split} \frac{\sum_{k=1}^{p_n}2^{-s\lambda_{n,k}}+\sum_{j=0}^\infty2^{-s(\lambda+j)}}{\left(\sum_{k=1}^{p_n}2^{-\lambda_{n,k}}+\sum_{j=0}^\infty 2^{-(\lambda+j)}\right)^s}={} &	\frac{\sum_{k=1}^{p_n}2^{-s\lambda_{n,k}}+2^{-s(\lambda-1)}(2^s-1)^{-1}}{\left(\sum_{k=1}^{p_n}2^{-\lambda_{n,k}}+2^{-(\lambda-1)}\right)^s}\\
		>{} &\frac{\sum_{k=1}^{p_n}2^{-s\lambda_{n,k}}+2^{-s(\lambda-1)}}{\left(\sum_{k=1}^{p_n}2^{-\lambda_{n,k}}+2^{-(\lambda-1)}\right)^s},
	\end{split}
\end{align*}
so that by Lemma \ref{inequalitylemma} there must exist an integer $m\geq 0$ such that 
\begin{align}\label{casen+1}
	\frac{\sum_{k=1}^{p_n}2^{-s\lambda_{n,k}}+\sum_{j=0}^m2^{-s(\lambda+j)}}{\left(\sum_{k=1}^{p_n}2^{-\lambda_{n,k}}+\sum_{j=0}^m2^{-(\lambda+j)}\right)^s}< t<	\frac{\sum_{k=1}^{p_n}2^{-s\lambda_{n,k}}+\sum_{j=0}^{m+1}2^{-s(\lambda+j)}}{\left(\sum_{k=1}^{p_n}2^{-\lambda_{n,k}}+\sum_{j=0}^{m+1}2^{-(\lambda+j)}\right)^s}.
\end{align}
Letting $p_{n+1}:=p_n+m+1$, and 
\[
\lambda_{n+1,k}:=\begin{cases}
	\lambda_{n,k}, & 1\leq k\leq p_n,\\
	\lambda+k-p_n-1& p_n<k\leq p_n+m+1,
\end{cases}
\]
we conclude from \eqref{casen+1} that \eqref{inequalitydenseset-s<1} is also valid with $n$ replaced by $n+1$.

The relation \eqref{inequalitydenseset2} is an immediate consequence of  \eqref{inequalitydenseset-s<1} and \eqref{differencevaluesf(x)}. 
\end{proof}

\subsection{Case $s=1$}

Let $\tau$ be the interval whose endpoints are the limit superior and the limit inferior of the sequence $(\Lambda(\eta(N)))_{N=1}^\infty$, and  let $F_1'$ and $\Lambda'$ respectively denote the sets of limit points of the sequences $\left(F_{N,1}\right)$ and $(\Lambda(\eta(N)))_{N=1}^\infty$. Then, $\tau=[-2\log 2,0]$, and what we want to show is that $F_1'=T_1$. Similar to how we argued at the beginning of Subsection \ref{sec:density1}, we can derive from \eqref{secondcase} the equivalence   $F_1'=T_1\iff \Lambda'=\tau$. 

By \eqref{doublingperiodicity2}, the set 
\[
\Lambda:=\{\Lambda(\eta(N)): N\in\mathbb{N}\}
\]
is contained in $\Lambda'$, and we have $\Lambda'\subset\tau$. So in order to prove that $\Lambda'=\tau$, it is  sufficient  to show that 	
\begin{align}\label{theo:densitys=1}\overline{\Lambda}=\tau.
\end{align} 

As in Subsection \ref{sec:density1}, for the justification of \eqref{theo:densitys=1} it is convenient to obtain first a different representation of the values $\Lambda(\eta(N))$. Suppose that $N\in\mathbb{N}$ has the binary decomposition \eqref{bindecompN}. Then
\begin{align*}
\Lambda(\eta(N)) & =\sum_{k=1}^{p}\frac{2^{n_{k}}}{N}\log\left(\frac{2^{n_{k}}}{N}\right)=\sum_{k=1}^{p}\frac{2^{n_{k}}}{N}\log\left(\frac{2^{n_{k}-n_{1}}}{N 2^{-n_1}}\right)\\
& =-\log(N 2^{-n_1})+\frac{1}{N 2^{-n_1}}\sum_{k=1}^{p} 2^{n_{k}-n_{1}}\log(2^{n_{k}-n_1})\\
& =-\log\left(\sum_{k=1}^{p}2^{n_{k}-n_1}\right)+\frac{1}{\sum_{k=1}^{p}2^{n_{k}-n_1}}\sum_{k=1}^{p} 2^{n_{k}-n_{1}}\log(2^{n_{k}-n_1}),
\end{align*}
so if we define $\lambda_{k}=n_{1}-n_{k}$, we have $0=\lambda_{1}<\lambda_{2}<\cdots<\lambda_{p}$ and
\begin{equation}\label{altrepLambdaetaN}
\Lambda(\eta(N))=-\log\left(\sum_{k=1}^{p}2^{-\lambda_{k}}\right)+\frac{1}{\sum_{k=1}^{p}2^{-\lambda_{k}}}\sum_{k=1}^{p} 2^{-\lambda_{k}}\log(2^{-\lambda_{k}}).
\end{equation}
It is therefore clear that
\[
\Lambda=\left\{-\log\left(\sum_{k=1}^{p}2^{-\lambda_{k}}\right)+\frac{\sum_{k=1}^{p} 2^{-\lambda_{k}}\log(2^{-\lambda_{k}})}{\sum_{k=1}^{p}2^{-\lambda_{k}}}: 0=\lambda_{1}<\cdots<\lambda_{p},\,\,\lambda_{i}\in\mathbb{Z}\right\}.
 \]

To simplify the forthcoming presentation, we will adopt the following notation. Given integers $0=\lambda_{1}<\lambda_{2}<\cdots<\lambda_{p}$, we define
\[
L_{p}(\lambda_{1},\ldots,\lambda_{p}):=-\log\left(\sum_{k=1}^{p}2^{-\lambda_{k}}\right)+\frac{\sum_{k=1}^{p} 2^{-\lambda_{k}}\log(2^{-\lambda_{k}})}{\sum_{k=1}^{p}2^{-\lambda_{k}}}.
\]

The equality \eqref{theo:densitys=1} is then an immediate consequence of the following result.

\begin{proposition}\label{prop:Lt}
Let $t$ be an interior point of the interval $\tau=[-2\log 2, 0]$ such that $t\notin \Lambda$. For each $n\in\mathbb{N}$, there exist integers $p_{n}\geq 1$, and $0=\lambda_{n,1}<\cdots<\lambda_{n,p_{n}}$, with $p_{n}<p_{n+1}$ for all $n\geq 1$, such that
\begin{equation}\label{ineqt}
L_{p_{n}+1}(\lambda_{n,1},\lambda_{n,2},\ldots,\lambda_{n,p_{n}},\lambda_{n,p_{n}}+1)<t<L_{p_{n}}(\lambda_{n,1},\lambda_{n,2},\ldots,\lambda_{n,p_{n}}).
\end{equation}
As a consequence, we have
\begin{equation}\label{limLtot}
\lim_{n\rightarrow\infty}L_{p_{n}}(\lambda_{n,1},\lambda_{n,2},\ldots,\lambda_{n,p_{n}})=t.
\end{equation}
\end{proposition}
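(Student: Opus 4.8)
The plan is to prove Proposition~\ref{prop:Lt} by induction on $n$, mimicking the structure of the proof of Proposition~\ref{propdensityfors<1}, but with the function $f$ there replaced by the appropriate ``append one more term'' operation for $L_p$. First I would analyze, for a fixed collection $0=\lambda_1<\cdots<\lambda_p$, the behavior of the quantity
\[
g(j):=L_{p+1}(\lambda_1,\ldots,\lambda_p,\lambda_p+j),\qquad j\geq 1,
\]
and show that $g(j)$ is monotone in $j$ and that $g(j)\to L_p(\lambda_1,\ldots,\lambda_p)$ as $j\to\infty$. This is the analogue of Lemma~\ref{inequalitylemma}: setting $B:=\sum_{k=1}^p 2^{-\lambda_k}$ and $D:=\sum_{k=1}^p 2^{-\lambda_k}\log(2^{-\lambda_k})$, one has, with $x=2^{-(\lambda_p+j)}$,
\[
L_{p+1}=-\log(B+x)+\frac{D+x\log x}{B+x},
\]
and differentiating in $x$ on the interval $(0,2^{-(\lambda_p+1)}]$ should reveal a definite sign (I expect it decreases, since adding a very small mass with very negative $\log$ pulls the weighted average of $\log$ down but also the $-\log$ term moves; the net effect needs to be checked). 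One also needs the quantitative estimate that the jump $|g(1)-\lim_{j\to\infty}g(j)|$ is $O(2^{-(\lambda_p+1)})$, which is the analogue of \eqref{differencevaluesf(x)} and is what forces \eqref{limLtot} once \eqref{ineqt} is in hand.

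Next I would set up the base case $n=1$ using the sequence $\Lambda(\eta(2^K-1))$, i.e. $L_K(0,1,\ldots,K-1)$. By \eqref{altrepLambdaetaN} (or the computation in the proof of Theorem~\ref{theo:lambda}) this sequence starts at $L_1(0)=0$ and, by the monotonicity just established (appending $\lambda_p+1=K$ to $(0,\ldots,K-1)$ gives $L_{K+1}(0,\ldots,K)$), is monotone and converges to $-2\log 2$. Since $t$ is an interior point of $[-2\log 2,0]$ with $t\notin\Lambda$, there is an index $K_1$ with $L_{K_1+1}(0,\ldots,K_1)<t<L_{K_1}(0,\ldots,K_1-1)$, giving \eqref{ineqt} for $n=1$ with $p_1=K_1$ and $\lambda_{1,k}=k-1$.

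For the inductive step, assuming \eqref{ineqt} holds for $n$, I would run the same two-stage argument as in the proof of Proposition~\ref{propdensityfors<1}: first, by the monotonicity of $g(j)$ towards $L_{p_n}(\lambda_{n,1},\ldots,\lambda_{n,p_n})$, find $\lambda\geq \lambda_{n,p_n}+2$ with
\[
L_{p_n+1}(\lambda_{n,1},\ldots,\lambda_{n,p_n},\lambda)<t<L_{p_n+1}(\lambda_{n,1},\ldots,\lambda_{n,p_n},\lambda-1);
\]
second, observe that appending the \emph{entire geometric tail} $\lambda,\lambda+1,\lambda+2,\ldots$ to $(\lambda_{n,1},\ldots,\lambda_{n,p_n})$ produces the value $L_{p_n+1}(\lambda_{n,1},\ldots,\lambda_{n,p_n},\lambda-1)$ (because the tail sums, with weights, to the same mass and the same weighted-$\log$ as a single term at $\lambda-1$ would contribute to a full geometric series — here I'd use $\sum_{j\geq 0}2^{-(\lambda+j)}=2^{-(\lambda-1)}$ and $\sum_{j\geq 0}(\lambda+j)2^{-(\lambda+j)}=(\lambda+1)2^{-(\lambda-1)}$, so effectively one replaces $\lambda-1$ by $\lambda+1$ in the log-weight), which lies strictly above $t$; then apply the monotonicity once more to find $m\geq 0$ so that truncating the tail at $\lambda+m$ versus $\lambda+m+1$ straddles $t$. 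Setting $p_{n+1}=p_n+m+1$ and $\lambda_{n+1,k}=\lambda_{n,k}$ for $k\leq p_n$, $\lambda_{n+1,k}=\lambda+(k-p_n-1)$ for $p_n<k\leq p_{n+1}$, gives \eqref{ineqt} for $n+1$, and $p_{n+1}\geq p_n+1>p_n$. Finally \eqref{limLtot} follows by sandwiching, using the $O(2^{-(\lambda_{n,p_n}+1)})$ jump estimate and $\lambda_{n,p_n}\to\infty$.

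The main obstacle I anticipate is the first step: unlike $x\mapsto x^s$, the map $x\mapsto x\log x$ is not of one sign, so verifying that $g(j)$ is genuinely monotone on the relevant range (and identifying the direction) requires care with the derivative $\frac{d}{dx}L_{p+1}$, and the geometric-tail identity in the inductive step must be checked against the precise form of $L_p$ rather than quoted by analogy. Once that lemma is pinned down, the density argument is a routine adaptation of Subsection~\ref{sec:density1}.
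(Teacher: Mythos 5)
Your plan follows the paper's proof in every structural respect (a monotonicity--plus--jump lemma for the ``append one more term'' map, the base case via $L_K(0,1,\ldots,K-1)$, and a two--stage inductive step using a geometric tail), but the inductive step as you have written it would fail because both of its inequalities are oriented backwards --- exactly the issue you flagged as needing verification, so let me pin it down. With $A:=\sum_{k=1}^p 2^{-\lambda_k}\log(2^{-\lambda_k})$ and $B:=\sum_{k=1}^p 2^{-\lambda_k}$ one finds
\[
f'(x)=\frac{B\log x-A}{(x+B)^2}=\frac{\sum_{k=1}^p 2^{-\lambda_k}\log(2^{\lambda_k}x)}{(x+B)^2}<0,\qquad 0<x<2^{-\lambda_p},
\]
so appending a term strictly \emph{decreases} $L$, and hence $g(j)=L_{p+1}(\lambda_1,\ldots,\lambda_p,\lambda_p+j)$ strictly \emph{increases} to $L_p(\lambda_1,\ldots,\lambda_p)$ as $j\to\infty$. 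Consequently the stage-one straddle must read $L_{p_n+1}(\ldots,\lambda)<t<L_{p_n+1}(\ldots,\lambda+1)$ for some $\lambda\geq\lambda_{n,p_n}+1$; your version $L_{p_n+1}(\ldots,\lambda)<t<L_{p_n+1}(\ldots,\lambda-1)$ is impossible, since $g$ increasing forces $L_{p_n+1}(\ldots,\lambda-1)<L_{p_n+1}(\ldots,\lambda)<t$.

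The tail step must be reoriented accordingly. Because appending terms decreases $L$, the truncated tails $L_{p_n+m}(\ldots,\lambda+1,\ldots,\lambda+m)$ decrease in $m$ from $L_{p_n+1}(\ldots,\lambda+1)>t$, so what one needs is that the \emph{full}-tail value lies strictly \emph{below} $t$ (not above it, as you assert); otherwise the decreasing partial sums never cross $t$ and no $m$ exists. The paper obtains this by observing that the tail $\lambda+1,\lambda+2,\ldots$ carries the same mass $\sum_{j\geq 1}2^{-(\lambda+j)}=2^{-\lambda}$ as a single term at $\lambda$ but a strictly smaller weighted logarithm (equivalently, $\sum_{j\geq 1}\frac{\lambda+j}{\lambda}2^{-j}>1$), whence the full-tail value is strictly less than $L_{p_n+1}(\ldots,\lambda)<t$. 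With these two orientations corrected your outline closes as in the paper. A minor point: the jump bound from the lemma is $\log(1+2^{-\lambda_p})+\lambda_p 2^{-\lambda_p}\log 2=O(\lambda_p2^{-\lambda_p})$, not $O(2^{-(\lambda_p+1)})$ as you anticipated, but this still tends to $0$ and suffices for \eqref{limLtot}.
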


In order to establish this proposition, we first prove a lemma.

\begin{lemma}\label{lemauxds1}
For any finite collection of $p\geq 1$ integers $0\leq \lambda_{1}<\cdots<\lambda_{p}$, the function
\[
f(x):=-\log\Big(x+\sum_{k=1}^{p}2^{-\lambda_{k}}\Big)+\frac{x\log x+\sum_{k=1}^{p} 2^{-\lambda_{k}}\log(2^{-\lambda_{k}})}{x+\sum_{k=1}^{p}2^{-\lambda_{k}}}
\]
is strictly decreasing on the interval $[0,2^{-\lambda_{p}}]$. We also have the estimate
\begin{equation}\label{estdiffineq}
f(0)-f(2^{-\lambda_{p}})\leq \log(1+2^{-\lambda_{p}})+2^{-\lambda_{p}}\log(2^{\lambda_{p}}).
\end{equation}
\end{lemma}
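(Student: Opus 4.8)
The plan is to write $f(x) = -\log(x+B) + \frac{x\log x + C}{x+B}$, where $B := \sum_{k=1}^p 2^{-\lambda_k}$ and $C := \sum_{k=1}^p 2^{-\lambda_k}\log(2^{-\lambda_k})\le 0$, and differentiate. Setting $g(x) := x\log x + C$ so that $g'(x) = \log x + 1$, the quotient rule gives
\[
f'(x) = -\frac{1}{x+B} + \frac{(\log x + 1)(x+B) - (x\log x + C)}{(x+B)^2} = \frac{B\log x + B - C - B}{(x+B)^2} = \frac{B\log x - C}{(x+B)^2}.
\]
Thus $f'(x) < 0$ precisely when $B\log x < C$, i.e. when $\log x < C/B$. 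Since $C/B = \Lambda(\text{the normalized vector}) $ — more concretely $C/B = \frac{\sum 2^{-\lambda_k}\log 2^{-\lambda_k}}{\sum 2^{-\lambda_k}}$, a weighted average of the values $\log 2^{-\lambda_k} = -\lambda_k\log 2$ with $\lambda_1 = 0$ possible but in any case $\lambda_k \ge 0$ — the quantity $C/B$ is $\le 0$, and in fact $C/B \ge \log 2^{-\lambda_p} = -\lambda_p \log 2$. Actually the sharper and simpler fact I need is merely $C/B > -\lambda_p\log 2 \ge \log x$ for $x\le 2^{-\lambda_p}$... wait, that inequality goes the wrong way; I need $\log x < C/B$. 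For $x \in (0, 2^{-\lambda_p}]$ we have $\log x \le -\lambda_p\log 2$, and since $C/B$ is a weighted average of $-\lambda_1\log 2, \dots, -\lambda_p\log 2$ with all weights positive and at least two distinct values when $p\ge 2$ (and with $\lambda_1 = 0 > -\lambda_p\log 2$ giving room), we get $C/B > -\lambda_p \log 2 \ge \log x$. When $p = 1$ one has $\lambda_1 = 0$, $B = 1$, $C = 0$, so $f'(x) = \log x /(x+1)^2 < 0$ on $(0,1] = (0, 2^{-\lambda_1}]$ directly. Hence $f' < 0$ on the relevant interval in all cases, and $f$ is strictly decreasing there.

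For the estimate \eqref{estdiffineq}, I would split the difference $f(0) - f(2^{-\lambda_p})$ into the two structural pieces:
\[
f(0) - f(2^{-\lambda_p}) = \Big[\log\big(2^{-\lambda_p} + B\big) - \log B\Big] + \Big[\frac{C}{B} - \frac{2^{-\lambda_p}\log(2^{-\lambda_p}) + C}{2^{-\lambda_p}+B}\Big].
\]
The first bracket equals $\log(1 + 2^{-\lambda_p}/B) \le \log(1 + 2^{-\lambda_p})$ since $B \ge 1$. For the second bracket, write it over the common denominator $B(2^{-\lambda_p}+B)$: the numerator is $C(2^{-\lambda_p}+B) - B(2^{-\lambda_p}\log 2^{-\lambda_p} + C) = 2^{-\lambda_p}(C - B\log 2^{-\lambda_p}) = 2^{-\lambda_p}(C + B\lambda_p\log 2)$. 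Since $C \le 0$, this is $\le 2^{-\lambda_p} B\lambda_p\log 2$, so the second bracket is $\le \frac{2^{-\lambda_p}\lambda_p\log 2}{2^{-\lambda_p}+B} \le \frac{2^{-\lambda_p}\lambda_p\log 2}{B} \le 2^{-\lambda_p}\lambda_p\log 2 = 2^{-\lambda_p}\log(2^{\lambda_p})$, again using $B \ge 1$. Adding the two bounds yields \eqref{estdiffineq}.

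The only mildly delicate point is the monotonicity claim in the degenerate direction: one must make sure $C/B > \log x$ holds on the whole closed interval $(0, 2^{-\lambda_p}]$ including the endpoint, which is where the strictness of $f$'s decrease and the positivity of all the weights in the average $C/B$ come in (so that $C/B$ is strictly larger than its smallest constituent $-\lambda_p\log 2$ when $p\ge 2$). I would handle the $p = 1$ case separately as above since then the "average" is trivial. Everything else is routine algebra, and the bounds are deliberately lossy (replacing $B$ by its minimum value $1$ throughout), which is all that the density argument in Proposition~\ref{prop:Lt} requires.
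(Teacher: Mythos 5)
Your argument is correct and follows essentially the same route as the paper's: the identical computation $f'(x)=(B\log x-C)/(x+B)^{2}$ with the same sign analysis (the paper rewrites the numerator as $\sum_{k}2^{-\lambda_{k}}\log(2^{\lambda_{k}}x)$, you phrase it as $\log x<C/B$ via a weighted average; your separate $p=1$ case is unnecessary, since $f'<0$ on the open interval already gives strict decrease on the closed one, and it also needlessly assumes $\lambda_{1}=0$), and the same two ingredients for \eqref{estdiffineq}, namely the nonpositivity of the relevant numerator and the bound $B\geq 1$. The only caveat --- shared with the paper's own proof, which uses $B\geq 1$ tacitly in its final inequality --- is that $B\geq 1$ requires $\lambda_{1}=0$, which the hypothesis $0\leq\lambda_{1}$ does not guarantee, though it does hold in every application of the lemma in Proposition~\ref{prop:Lt}.
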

\begin{proof}
Set $A:=\sum_{k=1}^{p} 2^{-\lambda_{k}}\log(2^{-\lambda_{k}})$ and $B:=\sum_{k=1}^{p}2^{-\lambda_{k}}$. It is clear that $f$ is continuous on $[0,\infty)$, and for $x>0$ we have
\begin{gather*}
f'(x)=-\frac{1}{x+B}+\frac{(1+\log x)(x+B)-x\log x-A}{(x+B)^{2}}\\
=\frac{-x-B+(1+\log x)(x+B)-x\log x-A}{(x+B)^{2}}=\frac{B\log x-A}{(x+B)^{2}}\\
=\frac{\sum_{k=1}^{p}2^{-\lambda_{k}}\log(2^{\lambda_{k}}x)}{(x+B)^2}
\end{gather*}
therefore $f'(x)<0$ for all $x\in(0,2^{-\lambda_{p}})$, and $f$ is strictly decreasing on the interval $[0, 2^{-\lambda_{p}}]$.

We have
\[
f(0)-f(2^{-\lambda_{p}})=-\log(B)+\frac{A}{B}+\log(2^{-\lambda_{p}}+B)-\frac{2^{-\lambda_{p}}\log(2^{-\lambda_{p}})+A}{2^{-\lambda_{p}}+B}.
\]
Note that $A+2^{-\lambda_{p}}\log(2^{-\lambda_{p}})\leq 0$, and so
\[
\frac{2^{-\lambda_{p}}\log(2^{-\lambda_{p}})+A}{2^{-\lambda_{p}}+B}\geq \frac{2^{-\lambda_{p}}\log(2^{-\lambda_{p}})+A}{B},
\]
which implies that
\[
f(0)-f(2^{-\lambda_{p}})\leq \log\Big(1+\frac{2^{-\lambda_{p}}}{B}\Big)-\frac{2^{-\lambda_{p}}\log(2^{-\lambda_{p}})}{B}\leq \log(1+2^{-\lambda_{p}})+2^{-\lambda_{p}}\log (2^{\lambda_{p}})
\]
as desired.
\end{proof}

\begin{proof}[\textbf{\emph{Proof of Proposition \ref{prop:Lt}:}}] The proof is by induction on $n$. In virtue of \eqref{vector-for-eta(2^p-1)} and \eqref{altrepLambdaetaN}, for every integer $K\geq 1$ we have
\[
\Lambda(\eta(2^{K}-1))=-\log\left(\sum_{k=0}^{K-1}2^{-k}\right)+\frac{1}{\sum_{k=0}^{K-1}2^{-k}}\sum_{k=0}^{K-1} 2^{-k}\log(2^{-k})=L_{K}(0,1,\ldots,K-1),
\]
and so by Lemma \ref{lemauxds1} the sequence $(\Lambda(\eta(2^{K}-1)))_{K=1}^{\infty}$ is strictly decreasing. Observe that the initial value of this sequence is $0$ and it has limit $-2\log 2$ (see the proof of Theorem \ref{theo:main1}). Since $-2\log 2<t<0$ and $t\notin \Lambda$, it follows that there exists an integer $K_{1}\geq 1$ such that
\[
L_{K_{1}+1}(0,\ldots,K_{1}-1,K_{1})<t<L_{K_{1}}(0,\ldots,K_{1}-1).
\]
This shows that \eqref{ineqt} is valid for $n=1$ by taking $p_{1}=K_{1}$ and $\lambda_{1,k}=k-1$ for $1\leq k\leq p_{1}$.

Now we assume that \eqref{ineqt} holds for some integer $n\geq 1$. By Lemma \ref{lemauxds1}, the sequence 
\[
L_{p_{n}+1}(\lambda_{n,1},\ldots,\lambda_{n,p_{n}},\lambda_{n,p_{n}}+j),\qquad j=1, 2,\ldots
\]
is strictly increasing and its limit is $L_{p_{n}}(\lambda_{n,1},\ldots,\lambda_{n,p_{n}})$. Therefore there exists an integer $\lambda\geq \lambda_{n,p_{n}}+1$ such that
\begin{equation}\label{ineqst}
L_{p_{n}+1}(\lambda_{n,1},\ldots,\lambda_{n,p_{n}},\lambda)<t<L_{p_{n}+1}(\lambda_{n,1},\ldots,\lambda_{n,p_{n}},\lambda+1).
\end{equation}
Now observe that
\begin{gather}
L_{p_{n}+1}(\lambda_{n,1},\ldots,\lambda_{n,p_{n}},\lambda)\notag\\
=-\log\left(\sum_{k=1}^{p_{n}}2^{-\lambda_{n,k}}+2^{-\lambda}\right)+\frac{\sum_{k=1}^{p_{n}}2^{-\lambda_{n,k}}\log(2^{-\lambda_{n,k}})+2^{-\lambda}\log(2^{-\lambda})}{\sum_{k=1}^{p_{n}}2^{-\lambda_{n,k}}+2^{-\lambda}}\notag\\
=-\log\left(\sum_{k=1}^{p_{n}}2^{-\lambda_{n,k}}+\sum_{j=1}^{\infty}2^{-(\lambda+j)}\right)+\frac{\sum_{k=1}^{p_{n}}2^{-\lambda_{n,k}}\log(2^{-\lambda_{n,k}})+2^{-\lambda}\log(2^{-\lambda})}{\sum_{k=1}^{p_{n}}2^{-\lambda_{n,k}}+\sum_{j=1}^{\infty}2^{-(\lambda+j)}}\notag\\
>-\log\left(\sum_{k=1}^{p_{n}}2^{-\lambda_{n,k}}+\sum_{j=1}^{\infty}2^{-(\lambda+j)}\right)+\frac{\sum_{k=1}^{p_{n}}2^{-\lambda_{n,k}}\log(2^{-\lambda_{n,k}})}{\sum_{k=1}^{p_{n}}2^{-\lambda_{n,k}}+\sum_{j=1}^{\infty}2^{-(\lambda+j)}}\notag\\
+\frac{\sum_{j=1}^{\infty}2^{-(\lambda+j)}\log(2^{-(\lambda+j)})}{\sum_{k=1}^{p_{n}}2^{-\lambda_{n,k}}+\sum_{j=1}^{\infty}2^{-(\lambda+j)}}\label{eqineq}
\end{gather}
where the inequality follows easily from $\sum_{j=1}^{\infty}\frac{\lambda+j}{\lambda}\,2^{-j}>1$. 

By Lemma \ref{lemauxds1}, from \eqref{ineqst} and \eqref{eqineq} it follows that there exists an integer $m\geq 2$ such that
\[
L_{p_{n}+m}(\lambda_{n,1},\ldots,\lambda_{n,p_{n}},\lambda+1,\ldots,\lambda+m)<t<L_{p_{n}+m-1}(\lambda_{n,1},\ldots,\lambda_{n,p_{n}},\lambda+1,\ldots,\lambda+m-1)
\]
so if we set $p_{n+1}:=p_{n}+m-1$ and
\[
\lambda_{n+1,k}:=\begin{cases}
\lambda_{n,k}, & 1\leq k\leq p_{n},\\
\lambda+k-p_{n}, & p_{n}<k\leq p_{n}+m-1,
\end{cases}
\]
we have shown that \eqref{ineqt} is valid with $n$ replaced by $n+1$.

If we define the function
\[
f(x):=-\log\Big(x+\sum_{k=1}^{p_{n}} 2^{-\lambda_{n,k}}\Big)+\frac{x\log x+\sum_{k=1}^{p_{n}} 2^{-\lambda_{n,k}}\log(2^{-\lambda_{n,k}})}{x+\sum_{k=1}^{p_{n}}2^{-\lambda_{n,k}}}\,,
\]
then from \eqref{ineqt} and \eqref{estdiffineq} we deduce that
\begin{align*}
0\leq L_{p_{n}}(\lambda_{n,1},\ldots,\lambda_{n,p_{n}})-t & =f(0)-t\leq f(0)-f(2^{-(\lambda_{n,p_{n}}+1)})\leq f(0)-f(2^{-\lambda_{n,p_{n}}})\\
& \leq \log(1+2^{-\lambda_{n,p_{n}}})+2^{-\lambda_{n,p_{n}}}\log(2^{\lambda_{n,p_{n}}}),
\end{align*}
and since $\lambda_{n,p_{n}}\rightarrow\infty$ as $n\rightarrow\infty$, \eqref{limLtot} follows.
\end{proof}

\bigskip

\noindent \textsc{Department of Mathematics, University of Central Florida, 4393 Andromeda Loop North, Orlando, FL 32816, USA} \\
\textit{Email address}: \texttt{abey.lopez-garcia\symbol{'100}ucf.edu}

\bigskip

\noindent \textsc{Department of Mathematics, The University of Mississippi, Hume Hall 305, P.O. Box 1848, University, MS 38677, USA}\\
\textit{Email address}: \texttt{minadiaz\symbol{'100}olemiss.edu}

\end{document}